\theoremstyle{plain}
\newtheorem{prop}{Proposition}
\newtheorem{theo}[prop]{Theorem}
\theoremstyle{remark}
\newtheorem{rema}[prop]{Remark}
\theoremstyle{definition}
\newtheorem{defi}[prop]{Definition}
\numberwithin{equation}{section}
\newcommand{\A}{{\mathbb A}}
\newcommand{\PP}{{\mathbb P}}
\newcommand{\F}{{\mathbb F}}
\newcommand{\Z}{{\mathbb Z}}
\newcommand{\cO}{{\mathcal O}}
\newcommand{\cI}{{\mathcal I}}
\newcommand{\eqto}{\stackrel{\lower1.5pt\hbox{$\scriptstyle\sim\,$}}\to}
\newcommand{\eqdashto}{\stackrel{\lower1.5pt\hbox{$\scriptstyle\sim\,$}}\dashrightarrow}
\DeclareMathOperator{\Spec}{Spec}
\DeclareMathOperator{\Tor}{Tor}
\DeclareMathOperator{\Br}{Br}
\DeclareMathOperator{\Proj}{Proj}
\begin{document}
\title[Involution surface bundles]{Involution surface bundles over surfaces}
\author{Andrew Kresch}
\address{
  Institut f\"ur Mathematik,
  Universit\"at Z\"urich,
  Winterthurerstrasse 190,
  CH-8057 Z\"urich, Switzerland
}
\email{andrew.kresch@math.uzh.ch}
\author{Yuri Tschinkel}
\address{
  Courant Institute,
  251 Mercer Street,
  New York, NY 10012, USA
}

\email{tschinkel@cims.nyu.edu}

\address{Simons Foundation\\
160 Fifth Avenue\\
New York, NY 10010\\
USA}

\date{July 3, 2018} 

\begin{abstract}
We construct models of involution surface bundles over algebraic surfaces,
degenerating over normal crossing divisors,
and with controlled singularities of the total space.
\end{abstract}

\maketitle

\section{Introduction}
\label{sec:introduction}
In this paper,
we continue our investigation of del Pezzo fibrations,
initiated in \cite{KTsurf}.
Here we treat the case of fibrations in minimal del Pezzo surfaces
of degree $8$, also known as involution surfaces and described from
the arithmetic viewpoint, e.g., in \cite{auelbernardara}.
These include, as a special case, fibrations in quadric surfaces.

The main result of \cite{APS} gives a dictionary between quadric surface
bundles $X\to S$, where $S$ is regular and has dimension at most $2$,
degenerating along a regular divisor to
$\mathsf{A}_1$-singular quadrics, and sheaves of Azumaya quaternion algebras on
a double cover of $S$.
The approach taken there is based on Clifford algebras and orders in quaternion algebras.

Our results are more general in several respects:
\begin{itemize}
\item We allow more general degenerations, including
quadric surface fibrations where the defining equation drops rank by $2$ along a
regular divisor (Definition \ref{defn.mdisb}).
\item We allow the base $S$ to have arbitrary dimension (Theorem \ref{thm.mdisb}).
\item When $S$ is a surface over an algebraically closed field,
we allow degeneration along an arbitrary divisor and construct
(Theorem \ref{thm.main}) birational models with controlled singularities
degenerating over normal crossing divisors.
\end{itemize}

\begin{defi}
\label{defn.mdisb}
Let $S$ be a regular scheme, in which $2$ is invertible in the local rings.
An \emph{involution surface bundle} over $S$ is a flat projective morphism
$\pi\colon X\to S$ such that the locus $U\subset S$ over which $\pi$ is smooth is
dense in $S$ and the fibers of $\pi$ over points of $U$ are
involution surfaces.
An involution surface bundle $\pi\colon X\to S$ is said to have
\emph{mild degeneration} if every singular fiber is geometrically isomorphic to
one of the following reduced schemes:
\begin{itemize}
\item (Type I) a quadric surface with an $\mathsf{A}_1$-singularity;
\item (Type II) the self-product of a reduced singular conic;
\item (Type III) a union of two copies of the Hirzebruch surface $\F_2$, each with $(-2)$-curve
glued to a fiber of the other;
\item (Type IV) the product with $\PP^1$ of a reduced singular conic.
\end{itemize}
\end{defi}

\begin{theo}
\label{thm.main}
Let $k$ be an algebraically closed field of characteristic different from $2$,
$S$ a smooth projective surface over $k$, and
\[ \pi\colon X\to S \]
a morphism of projective varieties whose generic fiber is an involution surface.
Then there exists a commutative diagram
\[
\xymatrix{
\widetilde{X} \ar@{-->}[r]^{\varrho_X} \ar[d]_{\tilde\pi} & X \ar[d]^{\pi} \\
\widetilde{S} \ar[r]^{\varrho_S} & S
}
\]
such that
\begin{itemize}
\item $\varrho_S$ is a proper birational morphism,
\item $\varrho_X$ is a birational map that restricts to an
isomorphism over the generic point of $S$,
\item $\tilde\pi$ is projective and flat,
and the complement of the locus over which $\tilde\pi$ is smooth is a
simple normal crossing divisor $\widetilde{D}$, and
\item the restriction of $\tilde\pi$ over the complement in $\widetilde{S}$ of
the singular locus of $\widetilde{D}$ is a mildly degenerating
involution surface bundle.
\end{itemize}
\end{theo}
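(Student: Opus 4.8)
The plan is to encode the generic fiber as arithmetic data over $K=k(S)$, spread that data out over $S$, bring it into normal crossing position by repeatedly blowing up points of the surface $S$, and finally apply Theorem~\ref{thm.mdisb} to build the family. By the classification of twisted forms of $\PP^1\times\PP^1$ (equivalently, of $4$-dimensional quadratic forms up to similarity via the even Clifford algebra), the involution surface $X_\eta$ over $K$ is determined by a quadratic \'etale $K$-algebra $L_\eta$ --- its discriminant algebra --- together with a quaternion algebra $Q_\eta$ over $L_\eta$ whose corestriction $\cores_{L_\eta/K}(Q_\eta)$ is split; the case $L_\eta\cong K\times K$ recovers quadric surface bundles. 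Choosing models, let $\nu\colon S'\to S$ be the normalization of $S$ in $L_\eta$, a finite double cover with branch divisor $B\subset S$, and spread $Q_\eta$ out to an Azumaya quaternion algebra $\cA$ over a dense open of $S'$; let $R'\subset S'$ be the reduced complement of the Azumaya locus and $R=\nu(R')\subset S$. After shrinking and re-spreading we may assume that, off $R'$, the algebra $\cA$ carries the order data feeding into the construction of Theorem~\ref{thm.mdisb}, and that the ramification of $L_\eta/K$ along the components of $B$ and the residues of $Q_\eta$ along the components of $R'$ are recorded by auxiliary finite data on those curves.

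The geometric heart of the argument is the choice of the modification $\varrho_S$. Using embedded resolution of curves on a smooth surface --- available over our algebraically closed field $k$ with $\chara k\neq 2$ --- I would take a proper birational morphism $\varrho_S\colon\widetilde S\to S$ from a smooth projective surface such that (i) the total transform of $B\cup R$ is a simple normal crossing divisor (and, after blowing up its crossing points, the pulled-back branch locus of the double cover is even a disjoint union of smooth curves, so that $\widetilde S'\to\widetilde S$ is regular over the generic point of each such curve), and (ii) at every point of $\widetilde S$ the preimage of $B\cup R$, the double cover $\widetilde S'\to\widetilde S$, and the Azumaya algebra are simultaneously in standard normal crossing form in suitable \'etale-local coordinates --- the double cover given by $\sqrt{u\,x_1^{a_1}x_2^{a_2}}$ with $u$ a unit, $\cA$ ramified along a sub-collection of the coordinate curves. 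I expect step (ii) to be the main obstacle: the modification must be chosen not merely to make $B\cup R$ normal crossing, but to resolve and separate the \emph{combined} quadratic-and-quaternionic ramification finely enough that, generically along every boundary component, the resulting degeneration of involution surfaces is one of Types~I--IV and no worse. Proving that such a $\varrho_S$ exists amounts to a case analysis, at a generic point of each prospective boundary component, of the minimal model of the involution-surface family attached to a quaternion algebra over a two-dimensional regular local ring with the branch and ramification divisors in normal crossing position; the split local cases reproduce the $\mathsf{A}_1$ and rank-$2$ quadric degenerations of Types~I and~IV, while the ramified-double-cover cases produce Types~II and~III.

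With $\widetilde S$, the double cover $\widetilde S'\to\widetilde S$, and $\cA$ in normal crossing form, apply Theorem~\ref{thm.mdisb} to obtain a mildly degenerating involution surface bundle over the complement in $\widetilde S$ of the finitely many singular points of the boundary divisor, and extend it across those codimension-two points --- where Theorem~\ref{thm.main} imposes no condition on the fiber --- keeping the morphism flat and projective; this produces $\tilde\pi\colon\widetilde X\to\widetilde S$, with $\widetilde D$ the total transform of $B\cup R$ (any redundant components discarded).

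It remains to verify the four assertions. The morphism $\varrho_S$ is proper birational as a composition of blow-ups. The morphism $\tilde\pi$ is flat and projective by construction; its non-smooth locus equals $\widetilde D$ and is simple normal crossing by step (ii) together with the explicit fiberwise description; and its generic fiber along each component of $\widetilde D$ is of Type~I, II, III, or~IV by the local computation above, now carried out over a discrete valuation ring. Finally, $\varrho_X\colon\widetilde X\dashrightarrow X$ is the birational map determined by the canonical identification of the generic fibers of $\tilde\pi$ and $\pi$ over $\Spec K$ --- both restrict to the fixed involution surface $X_\eta$ --- so it restricts to an isomorphism over the generic point of $S$, completing the proof.
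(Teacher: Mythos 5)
Your overall skeleton (pass to the discriminant double cover, put the degeneration data in normal crossing position by blowing up the base, apply Theorem~\ref{thm.mdisb} away from the crossing points, and fill in over the finitely many remaining points) is the same as the paper's, but two steps that the paper treats as genuine inputs are missing, and one piece of the setup is wrong. First, the setup: twisted forms of $\PP^1\times\PP^1$ are classified by a quadratic \'etale algebra $L/K$ together with an arbitrary degree~$2$ central simple algebra over $L$; there is \emph{no} corestriction condition. The condition $\cores_{L/K}[Q]=0$ characterizes exactly the quadric surfaces, and imposing it (equivalently, working with $4$-dimensional quadratic forms up to similarity) excludes precisely the involution surfaces that are not quadrics --- the cases responsible for Type III and Type IV degenerations, which the theorem is designed to include. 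Second, and more seriously, Theorem~\ref{thm.mdisb} does not take a Brauer class or an Azumaya algebra with controlled residues as input: it takes a \emph{regular} conic bundle over the double cover, with singular fibers over a prescribed regular divisor $\psi^{-1}(D_2)\cup D_3\cup D'_4$ (in particular with the Type IV degeneracy lying over only one of the two preimage components, giving the marking $D'_4$). Your phrase ``the algebra $\cA$ carries the order data feeding into the construction'' does not produce such an object: putting the ramification divisor of the class in SNC position does not by itself yield a conic bundle with regular total space and regular discriminant over the cover, compatibly with a finite degree-$2$ map to a blow-up of $S$. This is exactly the step the paper supplies by invoking Sarkisov's theorem on standard models of conic bundles over a modification $\widetilde{T}'$ of the cover, together with the auxiliary diagram making $\widetilde{T}'\to\widetilde{S}$ finite of degree $2$; without that (or an equivalent construction, e.g.\ from a maximal order plus a standardization argument) you cannot legitimately invoke Theorem~\ref{thm.mdisb}.

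The final step also hides a real difficulty: extending the family across the finitely many points of $\widetilde{D}^{\mathrm{sing}}$ ``keeping the morphism flat and projective'' is not automatic, because flat extension over a $2$-dimensional regular local ring at an isolated point can fail (the blow-up of a point of the base is the standard cautionary example), and the theorem does require flatness and an SNC non-smooth locus at those points even though it imposes no condition on the fibers there. The paper handles this by the explicit codimension-$2$ local models of Section~\ref{sec:codim2} (where flatness is verified by exhibiting free module bases), following the filling-in technique of the earlier paper; your proposal needs either these models or some other argument (e.g.\ extending the rank-$9$ anticanonical bundle and proving flatness of the closure) at this point.
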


The singularities of $\widetilde{X}$ are analyzed in
Section \ref{sec:generic}, and
the structure of $\tilde\pi$ over the singular locus of $\widetilde{D}$
is analyzed in Section \ref{sec:codim2}.
Theorem \ref{thm.main} is restated, with this analysis, as
Theorem \ref{thm.mainst} in Section \ref{sec.models}.
Our second main result, Theorem \ref{thm.main2}, shows that
an involution surface bundle $\widetilde{X}$ as in
Theorem \ref{thm.main} is determined uniquely, up to
birational modification, by the data of a (possibly ramified)
double cover of $S$ and a $2$-torsion (possibly ramified)
Brauer group element on the double cover.
In the same setting,
we obtain in Theorem \ref{thm.main3} a strengthening of the dictionary of \cite{APS},
which allows degeneration over a simple normal crossing divisor.

Our approach is intrinsic to involution surfaces, and therefore the
good models that we produce, starting from a quadric surface fibration
with the more general degeneration pattern,
will not embed into a $\PP^3$-bundle over the base.
This more general degeneration pattern was an essential ingredient in
the application to stable rationality given in \cite{HPT}.

In our approach we continue the systematic use of root stacks,
as in \cite{HKTconic}, \cite{KTsurf}, \cite{HKTthreefolds}.

\medskip
\noindent
\textbf{Acknowledgments:}
We are grateful to Brendan Hassett and Alena Pirutka for stimulating discussions.
The second author is partially supported by NSF grant 1601912. 
Part of this work was done during a visit of the second author to the ETH, 
and he is grateful for ideal working conditions at FIM.

\section{Generic degenerations}
\label{sec:generic}
In this section we examine models of an involution surface over a DVR;
see \cite{corti} for a general discussion in the context of the
minimal model program.
We start with a description of an involution surface over a field $K$,
a surface that is geometrically isomorphic to $\PP^1\times\PP^1$.
Then we consider $K$ as the fraction field of a DVR and investigate the
possible degenerations.

Let $K$ be a field of characteristic different from $2$.
An involution surface $X$ over $K$ is classified by the data of an
\'etale $K$-algebra $L$ of degree $2$ and a central simple algebra
$B$ over $L$ of degree $2$; see, e.g., \cite[Exa.\ 3.3]{auelbernardara}.
The corresponding Brauer-Severi scheme is a conic $C$ over $L$, and
$X$ is isomorphic to the restriction of scalars of $C$ via the extension $L/K$.
We have $[B]=0$ in $\Br(L)$ if and only if $X(K)\ne \emptyset$
\cite[Lem.\ 7.1]{auelbernardara}.
Furthermore, $X$ has Picard number $2$ if and only if
$L\cong K\times K$, and $X$ is isomorphic to a quadric surface if and only if
$[B]$ corestricts to $0$ in $\Br(K)$, or equivalently, is in the image of the
restriction map $\Br(K)[2]\to \Br(L)[2]$.

The extension $L/K$ is called the \emph{discriminant extension} of
the involution surface $X$.
Rulings of $X$, i.e., projections to genus zero curves, are defined over $L$.

Now let $\mathfrak{o}_K$ be a DVR with fraction field $K$ and residue field $\kappa$,
also of characteristic different from $2$.
Let $\mathfrak{o}_L$ be the integral closure of $\mathfrak{o}_K$ in $L$.
Let $\beta\in \Br(L)[2]$ denote the Brauer class of $B$.
There are the following possibilities.

\begin{itemize}
\item $L$ is isomorphic to $K\times K$ or is a split unramified quadratic
extension of $K$.
Then $\Spec(\mathfrak{o}_L)$ has two closed points, each with residue field $\kappa$.
\begin{itemize}
\item $\beta$ is unramified at both closed points of $\Spec(\mathfrak{o}_L)$.
\item $\beta$ is unramified at one of the closed points points of $\Spec(\mathfrak{o}_L)$ and
ramified at the other.
\item $\beta$ is ramified at both closed points of $\Spec(\mathfrak{o}_L)$.
\end{itemize}
\item $L$ is an inert unramified quadratic extension field of $K$.
Then $\mathfrak{o}_L$ is a DVR whose residue field $\lambda$ is a quadratic extension of $\kappa$.
\begin{itemize}
\item $\beta$ is unramified.
\item $\beta$ is ramified.
\end{itemize}
\item $L$ is a ramified quadratic extension field of $K$.
Then $\mathfrak{o}_L$ is a DVR with residue field $\kappa$.
\begin{itemize}
\item $\beta$ is unramified.
\item $\beta$ is ramified.
\end{itemize}
\end{itemize}

In the first subcase of the first and second cases the description of
$X$ over $K$ as a restriction of scalars
extends to yield a smooth model of $X$ over $\mathfrak{o}_K$.
We organize the remaining cases as follows.

\begin{itemize}
\item I: $L$ is ramified, and $\beta$ is unramified.
\item II: $L$ is unramified, and $\beta$ is ramified at all closed points of $\Spec(\mathfrak{o}_L)$.
\end{itemize}

Types I and II occur for quadric bundles.
In this case, the rank of the defining quadratic equation drops by $1$, respectively by $2$.

\begin{itemize}
\item III: $L$ is ramified, and $\beta$ is ramified.
\item IV: $L$ is isomorphic to $K\times K$ or is split unramified, and
$\beta$ is ramified at one and unramified at the other closed point of $\Spec(\mathfrak{o}_L)$.
\end{itemize}

Types III and IV occur only in involution surfaces that are not
quadric surfaces.

\begin{rema}
\label{rem.relveryample}
For involution surfaces, as well as all of the singular degenerations in
Definition \ref{defn.mdisb}, the dualizing sheaf has dual which is
very ample with $9$-dimensional space of global sections.
As in \cite[Rem.\ 4.2]{KTsurf} and its consequences mentioned in
Section 7 of op.\ cit., this implies that any
mildly degenerating involution surface bundle $\pi\colon X\to S$ has
relatively very ample line bundle $\omega_{X/S}^\vee$, inducing an
embedding
\[ X\to \PP(\pi_*(\omega_{X/S}^\vee)^\vee) \]
into the projectivization of a rank $9$ vector bundle over $S$.
\end{rema}

If $S$ is irreducible, then an involution surface bundle
$\pi\colon X\to S$ has a well-defined discriminant extension, an \'etale
algebra extension of degree $2$ of the function field of the generic point.
We will describe the discriminant extension, as well, by the
integral closure of $S$ with respect to this \'etale algebra extension.
In general, $S$ is the disjoint union of its irreducible components,
and we extend the notion of discriminant extension accordingly.
If we fix, over the generic point $\eta$ of each component of $S$,
an \'etale $\kappa(\eta)$-algebra $\lambda(\eta)$ and
a compatible collection of identifications, for all
\'etale $\kappa(\eta)$-algebras, of identifications of rulings
of the base-change of $X$ with $\kappa(\eta)$-algebra homomorphisms to $\lambda(\eta)$,
then we will describe $X\to S$ as \emph{rigidified} by
$(\lambda(\eta)/\kappa(\eta))_\eta$ (or by the
integral closure of $S$ with respect to
$(\lambda(\eta)/\kappa(\eta))_\eta$).
Given a rigidification of $X\to S$, the Brauer class
$\beta$ at the generic point of each component of $T$ is determined
uniquely, and not only up to Galois conjugation.

\begin{defi}
\label{def.IVmarking}
Let $\mathfrak{o}_K$ and $\mathfrak{o}_L$ be as above, and let
$X$ be an involution surface over $K$, rigidified by $L/K$.
Suppose that $L$ is isomorphic to $K\times K$ or is split unramified over $K$.
We say that $X$ has \emph{Type IV marking} $\varepsilon\in \Spec(\mathfrak{o}_L)$
if $\varepsilon$ is a closed point of $\Spec(\mathfrak{o}_L)$, such that
the Brauer class $\beta\in \Br(L)[2]$ determined by the rigidified
involution surface extends to $\Spec(\mathfrak{o}_L)\smallsetminus \{\varepsilon\}$
but is ramified at $\varepsilon$.
\end{defi}

In the setting of an involution surface bundle $\pi\colon X\to S$, rigidified by $\psi\colon T\to S$,
we will speak of Type IV marking by a regular divisor $D'\subset T$,
mapping isomorphically to image $D\subset S$ disjoint from the
branch locus of $T\to S$, when
\begin{itemize}
\item $D$ is the locus in $X$ of Type IV fibers, and
\item for the generic point $\varepsilon$ of each component of $D'$, the restriction of $\pi$ over
$\Spec(\mathcal{O}_{S,\psi(\varepsilon)})$ has Type IV marking $\varepsilon$.
\end{itemize}

%

Let $\psi\colon T\to S$ be a finite flat morphism of degree $2$ between
regular schemes with branch locus $D_1\cup D_3$, and let
$D_2$ and $D_4$ be divisors in $S$.
Suppose that $D_1$, $\dots$, $D_4$ are regular and pairwise disjoint,
and $D'_4\subset T$ is a divisor mapping isomorphically by $\psi$ to $D_4$.
There are isomorphic copies of $D_1$ and $D_3$ in $T$, which we again
denote by $D_1$ and $D_3$.
We describe a procedure that transforms a
regular conic bundle $C\to T$ with singular fibers over
$\psi^{-1}(D_2)\cup D_3\cup D'_4$, to a
mildly degenerating involution surface bundle over $S$
with Type I fibers over $D_1$, Type II fibers over $D_2$,
Type III fibers over $D_3$, Type IV fibers over $D_4$, and Type IV marking $D'_4$.
We will see that the resulting involution surface bundle $X\to S$ has
singularties only over $D_2$, as in the following definition.

\begin{defi}
\label{defn.simple}
Let $S$ be a regular scheme, in which $2$ is invertible in the local rings.
We call a mildly degenerating involution surface bundle
$\pi\colon X\to S$ \emph{simple} if the locus where $\pi$ has
singular fibers is the disjoint union of four regular divisors
$D_1$, $\dots$, $D_4$, where $\pi$ has Type I fibers over $D_1$,
Type II fibers over $D_2$, Type III fibers over $D_3$, and Type IV fibers over $D_4$,
and if, letting $s\colon D_2\to X\times_SD_2$ denote the section which in each geometric fiber
is the intersection point of all the components, $X\smallsetminus s(D_2)$ is regular
and $X$ has ordinary double point singularities along $s(D_2)$.
\end{defi}

First, we consider the case that
$D_1$ and $D_3$ are empty:
\[
\xymatrix{
C \ar[r] & T\ar[d] \\
& S
}
\]
The vertical map is finite \'etale of degree $2$.
We restrict scalars along this to obtain $X$.

In the general case, we appeal to $[T/\mu_2]$, isomorphic to the root stack $\sqrt{(S,D_{13})}$ of
$S$ along $D_{13}:=D_1\cup D_3$, with finite \'etale cover by $T$.
The construction starts by replacing $C$ by a $\PP^1$-fibration
over the root stack $\sqrt{(T,D_3)}$, where the fibers over the
gerbe of the root stack $\mathcal{G}_3$ are $\PP^1$ with
nontrivial action of the stabilizer $\mu_2$.
This is accomplished by applying \cite[Prop.\ 3.1]{KTsurf} to
$C$ over the complement of the pre-image of
$D_{24}:=D_2\cup D_4$ and gluing with $C|_{T\smallsetminus D_{13}}$:
\[
P:=C|_{T\smallsetminus D_{13}}\cup
\big(\text{smooth $\PP^1$-fibration over
$\sqrt{(T\smallsetminus \psi^{-1}(D_{24}),D_3)}$}\big).
\]
The following diagram contains a fiber square whose vertical maps are
finite \'etale of degree $2$:
\begin{equation}
\begin{split}
\label{eqn.toomanyroots}
\xymatrix{
P \ar[r] &\sqrt{(T,D_3)}\ar[r]\ar[d]\ar@{}[dr]|{\square} & T\ar[d] \\
&\sqrt{([T/\mu_2],\mathcal{G}_3)} \ar[r] \ar[d]_{\substack{\text{iterated root:}\\ \{(D_1,2),(D_3,4)\}}}
& [T/\mu_2] \ar[dl]^{\text{root $D_{13}$}} \\
&S
}
\end{split}
\end{equation}
We restrict scalars to obtain a smooth $\PP^1\times\PP^1$-fibration
\[ \omega\colon
\Pi\to \sqrt{([T/\mu_2],\mathcal{G}_3)}. \]

It remains to modify $\Pi$ over $D_1$ and $D_3$ to obtain $\Pi'$ that descends to
an involution surface bundle over $S$.
Over the gerbe of the root stack $\mathcal{G}_1$ over $D_1$ the stabilizer $\mu_2$ acts on geometic fibers by
swapping the factors, fixing the diagonal.
We blow up the diagonal, to obtain in each geometric fiber a Hirzebruch surface $\F_2$.
Contracting the proper transform of $\omega^{-1}(\mathcal{G}_1)$ to a
copy of $\mathcal{G}_1$ yields $\Pi'$ away from $D_3$.
To see that $\Pi'$, away from $D_3$, descends to a
regular scheme, flat over $S$,
we invoke \cite[Prop.\ A.9]{KTsurf}, which identifies
the conormal sheaf $\cI_{\mathcal{G}_1/\Pi'}/\cI_{\mathcal{G}_1/\Pi'}^2$ with
the direct image of the conormal sheaf of the proper transform of
$\omega^{-1}(\mathcal{G}_1)$.
On geometric fibers this is
$H^0(\PP^1\times\PP^1,\cO_{\PP^1\times \PP^1}(1,1))$, which as
$\mu_2$-representation splits as trivial rank $3$ representation and
nontrivial rank $1$, the latter generated by a
local defining equation $t$ of $D_1$ in $T$ whose square is a
local defining equation of $D_1$ in $S$.
The modification over $D_3$ is more complicated and proceeds in two steps:
\begin{itemize}
\item[(i)] blow-up and contraction, descent to $[(T\smallsetminus D_1)/\mu_2]$;
\item[(ii)] further blow-up and contraction to obtain $\Pi'$.
\end{itemize}

Fix a point $x\in D_3$, and let
$A$ be the coordinate ring of an affine neighborhood of $x$ in $S$,
disjoint from $D_1$, $D_2$, and $D_4$.
Let $A':=A[t]/(t^2-f)$, where $f$ is a local defining equation of $D_3$.
As in the proof of loc.\ cit., there is an \'etale local model for $P$ of the form
\[
[\Proj(A''[u,v])/\mu_2],
\]
where $A''$ denotes $A'[s]/(s^2-t)$, with $\mu_2$ acting by scalar multiplication on
$s$ and $u$ and trivially on $v$.
Over the affine neighborhood, the fiber square in \eqref{eqn.toomanyroots} becomes
\[
\xymatrix{
[\Spec(A'')/\mu_2] \ar[r] \ar[d] & \Spec(A')\ar[d] \\
[\Spec(A'')/\mu_4] \ar[r] & [\Spec(A')/\mu_2]
}
\]
Now we claim that $\Pi$ is
\[ [\Spec(A'')\times \PP^1\times \PP^1/\mu_4], \]
where the action of $\mu_4$ on $\PP^1\times \PP^1$ is by
\[ i\cdot ((a:b),(c:d)):=((ic:d),(ia:b)). \]
Over each copy of $B\mu_4$ we find two geometric points
with $\mu_4$-stabilizer and one with $\mu_2$-stabilizer.

The claim reduces to the following computation, over
$\Spec(\Z[1/2])$ of the
restriction of scalars along $B\mu_2\to B\mu_4$
of $[\PP^1/\mu_2]$, where $\mu_2$ acts by
$(u:v)\mapsto (-u:v)$.
Consider a general
$\mu_4$-torsor $T\to S$ of schemes over $\Spec(\Z[1/2])$.
Then $\mu_2$-equivariant $T\to \PP^1$ morphisms are in bijective correspondence
with $\mu_4$-equivariant maps $T\to \PP^1\times \PP^1$,
for the $\mu_4$-action on $\PP^1\times \PP^1$ indicated above, by
\[ f\mapsto (f,(-i\cdot)\circ f\circ (i\cdot)), \]
where the expression on the right is independent of the choice of
primitive fourth root of unity $i$, by $\mu_2$-equivariance of
$f\colon T\to \PP^1$.
Restriction of scalars thus yields $[\PP^1\times \PP^1/\mu_4]$.

Step (i) is to blow up the locus with $\mu_4$-stabilizer, replacing each
geometric point with a copy of $\PP^2$, acted upon by $\mu_4$ with
one isolated fixed point, one pointwise fixed line, and generic stabilizer $\mu_2$.
The copy of $\PP^1\times \PP^1$ becomes a degree $6$ del Pezzo surface $F$, which
may be contracted to a line (as a conic bundle with, geometrically, two singular fibers).

For step (ii) we blow up the fixed locus for the $\mu_2$-action.
Geometric fibers then have four components:
two copies of $\F_2$ pointwise fixed by $\mu_2$
and two copies of $\F_1$.
The latter may be contracted to yield $\Pi'$ with contraction onto a locus
$Z$, whose fibers over $\mathcal{G}_3$ are reduced singular conics.


\begin{theo}
\label{thm.mdisb}
Let $S$ be a regular scheme, such that $2$ is invertible in the local rings
of $S$, and let $D\subset S$ be a regular divisor.
We suppose $D$ is written as a disjoint union of $D_1$, $D_2$, $D_3$, $D_4$,
and we are given a regular scheme $T$ with finite flat morphism
$\psi\colon T\to S$ of degree $2$, branched over $D_1\cup D_3$, and
divisor $D'_4$ mapping isomorphically to $D_4$.
Then the construction described above identifies, up to unique isomorphism:
\begin{itemize}
\item mildly degenerating regular conic bundles over $T$, with singular fibers
over $\psi^{-1}(D_2)\cup D_3\cup D'_4$, with
\item mildly degenerating simple involution surface bundles over $S$, rigidified by
$T\to S$, with singular fibers over $D_1$, $\dots$, $D_4$ as in Definition \ref{defn.simple}
and Type IV marking $D'_4$.
\end{itemize}
\end{theo}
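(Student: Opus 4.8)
The plan is to verify that the construction preceding the statement is functorial and produces a bijection, by exhibiting both a forward map (the construction) and an inverse map (passage to rulings), and checking they are mutually inverse étale-locally on $S$. First I would observe that both sides of the claimed correspondence are of local nature on $S$: a mildly degenerating conic bundle over $T$, resp. a simple involution surface bundle over $S$, is determined by its restrictions to an étale (or even Zariski) cover, together with gluing data, and the construction is compatible with base change along étale morphisms $S'\to S$ that are transverse to $D$. Hence it suffices to treat, separately, (a) the complement of $D$, where $T\to S$ is finite étale and restriction of scalars along $T\to S$ is an equivalence with quasi-inverse given by base change to $T$ and projection to a ruling; (b) a neighborhood of a general point of each $D_i$, where the explicit DVR-type analysis of Section \ref{sec:generic} applies; and (c) a neighborhood of $D'_4$ for the Type IV marking.

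The key steps, in order, are: (1) Construct the forward functor. Given a mildly degenerating regular conic bundle $C\to T$ with singular fibers over $\psi^{-1}(D_2)\cup D_3\cup D'_4$, run the construction: form $P$ over $\sqrt{(T,D_3)}$, pass to the iterated root stack $\sqrt{([T/\mu_2],\mathcal{G}_3)}$, restrict scalars to get the $\PP^1\times\PP^1$-fibration $\Pi$, then perform the blow-up/contraction modifications over $D_1$ (one step) and over $D_3$ (the two steps (i), (ii)) to obtain $\Pi'$, and descend to $X\to S$; the Type IV locus $D_4$ and its marking $D'_4$ come along unchanged from the restriction-of-scalars step. Check that $X\to S$ is a mildly degenerating simple involution surface bundle with the asserted fiber types and marking: this is exactly the content of the local computations already carried out (the conormal-sheaf splitting via \cite[Prop.\ A.9]{KTsurf} for $D_1$, the $\mu_4$-model for $D_3$, and ordinary double points along $s(D_2)$), plus Remark \ref{rem.relveryample} to see projectivity of the descended object. (2) Construct the inverse functor. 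Given a simple involution surface bundle $X\to S$ rigidified by $T\to S$, base-change to $T$; over $T\smallsetminus D_3$ the rigidification picks out a ruling $X_T\dashrightarrow C$ onto a conic bundle, and one checks it extends to a regular conic bundle over all of $T$ with singular fibers exactly over $\psi^{-1}(D_2)\cup D_3\cup D'_4$ by the same local models read in reverse (over $D_3$ the ruling exists only after the root-stack/blow-up data is recovered, which is forced by the fiber type). (3) Check the two functors are mutually inverse: étale-locally both composites are identities by the explicit descriptions, and since both constructions commute with étale base change and both categories satisfy étale descent, the composites are globally the identity. (4) Upgrade "bijection on isomorphism classes" to "identification up to unique isomorphism" by noting that each object on either side has no nontrivial automorphisms compatible with the rigidification/marking — a rigidified conic bundle has trivial automorphism group over the generic point of $T$, hence (by regularity and the already-established structure) trivial automorphism group, and likewise for rigidified involution surface bundles.

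I expect the main obstacle to be Step (2), the construction of the inverse over a neighborhood of $D_3$: unlike the complement of $D$, there the conic bundle $C$ is not literally obtained by restricting a ruling of $X$, since $X$ has Type III fibers (two copies of $\F_2$ glued along fibers) with no honest morphism to a conic bundle over $T$; one must instead recover $C$ by reversing the two-step blow-up/contraction of (i) and (ii) — i.e. reconstruct the $\mu_4$-equivariant $\PP^1\times\PP^1$-model $\Pi$ from $\Pi'$ — and verify this reverse procedure is well-defined and canonical, independent of choices, using that $X$ is \emph{simple} and that the rigidification pins down $\beta$ (not just its Galois-conjugacy class) so that the relevant square root / fourth-root stack structure is uniquely determined. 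Establishing that the reverse modification lands in regular conic bundles (and not something with worse singularities) will require the same conormal-sheaf bookkeeping as in the forward direction, now used as a constraint rather than a computation.
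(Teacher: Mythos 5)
Your high-level architecture (the stated construction as the forward map, recovery of a conic bundle as the inverse, verification \'etale-locally) agrees with the paper's, but the plan has a genuine gap in exactly the place you flag, plus an invalid reduction. Your opening reduction to (a) the complement of $D$ and (b) neighborhoods of \emph{general} points of the $D_i$, ``where the explicit DVR-type analysis of Section 2 applies,'' does not cover $S$: the base is an arbitrary regular scheme, the $D_i$ have non-generic points, and at such points the local ring is not a DVR, so the Section 2 classification gives nothing. The technical heart of the paper's proof is precisely to establish \'etale-local normal forms at \emph{arbitrary} points of $D_1$ and $D_3$, namely $x_1^2+x_2^2+x_3^2=f$ over $D_1$ and $u^2-v^2w^2=f$ over $D_3$, by cotangent-space arguments at a point $r$ over $s\in S$ with $n$-dimensional local ring: one shows $f\in\cI^2$ over $D_1$ and produces an invertible symmetric matrix, and over $D_3$ one proves the identity $u'\mathfrak{n}+(vw)=\cI\cap\mathfrak{n}^2$ via a $\Tor$ computation, which allows replacing $u''$ by $u:=2k^{-1}u''+vw$ so that $u^2-v^2w^2$ is a unit times $f$. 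Nothing in your plan supplies or replaces this; ``the same conormal-sheaf bookkeeping, used as a constraint'' is a placeholder, and the forward conormal computation (via \cite[Prop.\ A.9]{KTsurf} over $D_1$) is of a different nature from the $D_3$ normal-form derivation.

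Second, your inverse functor is under-specified where it matters. ``Base change to $T$ and project to a ruling'' works over the smooth locus, but over $D_2$ and $D_4$ the existence of a family of rulings, i.e.\ a morphism to a conic bundle over $T$, is exactly what must be produced -- you cannot project to a ruling before knowing $X$ is a restriction of scalars. The paper instead constructs the conic bundle from the anticanonical embedding $X\subset\PP(\pi_*(\omega_{X/S}^\vee)^\vee)$ (Remark \ref{rem.relveryample}) via the relative Grassmannian of planes meeting $X$ in a conic, discarding overdimensional components over $D_2$, and proves regularity of the result by contradiction: a non-regular output would, under the forward construction, return $X$ with singularities a simple bundle cannot have. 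Likewise, over $D_3$ the paper does not ``reverse the blow-ups'' directly on $X$; it first uses the normal form $u^2-v^2w^2=f$ to pass to $\sqrt{([T/\mu_2],\mathcal{G}_3)}$, where no Type I or III fibers remain, then applies the unramified case there and \cite[Prop.\ 3.1]{KTsurf} to descend to a regular conic bundle over $T$. Your step (2) presupposes the local structure of $X$ over $D_1$ and $D_3$ that these computations are needed to establish, so as written the proposal does not prove the reverse direction.
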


\begin{rema}
\label{rem.DVRnew}
Even over a DVR the construction of Theorem \ref{thm.mdisb} includes new cases.
\end{rema}

\begin{proof}
We first treat that case that $T\to S$ is unramified, i.e., $D_1$ and $D_3$
are empty.
The restriction of scalars construction gives the
forwards construction, producing an involution surface bundle out of a
mildly degenerating regular conic bundle.
The description of the singularities is clear.
For the reverse construction, a mildly degenerating involution surface bundle
over $S$ determines an embedding in a $\PP^8$-bundle over $S$
(Remark \ref{rem.relveryample}).
Now the relative Grassmannian of planes meeting $X$ in a conic supplies,
after throwing away overdimensional components over $D_2$,
the reverse construction.
This produces a mildly degenerating conic bundle over $T$.
To see that this is regular, we suppose the contrary.
Then the forwards construction recovers $X$, but with singularities not
of the type that $X$ is assumed to possess.

For the general case,
we follow the strategy of the proof of \cite[Prop.\ 4.4]{KTsurf}.
The forwards construction has been described already.
We need to verify that the outcome of the modifications described over
$D_1$ and $D_3$ satisfy the claimed conditions.
This is carried out as in \cite[\S5]{KTsurf}.
We provide details only for the treatment over $D_3$.

For the singularity analysis of the stack obtained by
contracting $F$ to a line in step (i),
we pass to an affine \'etale neighborhood which trivializes the
discrimant extension, and let $H$, $I$, $J$, $K$ denote the proper transforms
of rulings of $X$, labelled so that the locus blown up in step (i)
consists of the intersection (of the images) of $H$ and $K$ and of
$I$ and $J$.
Then the following Cartier divisors determine Cartier divisors upon contraction:
\begin{gather*}
H+I,\qquad
J+K,\qquad
E_1+E_2+F, \\
2E_1+F+H+K,\qquad
2E_2+F+I+J.
\end{gather*}
There is thus a local defining equation
\[ yzs^2=uv \]
where $y$ and $z$ are sections of a line bundle
that has degree $1$ on each of the components of the
fiber $\PP^2\sqcup_{\PP^1}\PP^2$.
Thus, after the descent in (i), which replaces $s^2$ by $t$, we obtain geometric fibers over $\mathcal{G}_3$
with two points which are $\mathsf{A}_1$-singular points of the total space.
The $\mu_2$-action on $\PP^1$ is nontrivial, swapping the singular points.
The fixed locus for the action, in each of the geometric fibers,
consists of two disjoint lines, one in each component.

The singularity analysis in step (ii) proceeds as before away from
singular points of fibers.
At a singular point $r\in Z$, there is a new ingredient.
Passing to an \'etale neighborhood, we may assume that the
singular conic is reducible, with local functions $v$ and $w$,
each vanishing on one component and restricting to a local parameter
on the other.
Now the sequence, analogous to \cite[(5.2)]{KTsurf}, is only right exact:
the left-hand map, from the fiber of $\cI_Z/\cI_Z^2$ to the Zariski cotangent space
at $r$, has kernel of dimension one, generated by $vw$.
The conclusion is, however, the same:
the Zariski cotangent space at $r$ has dimension $n+3$, where $n$ is
the dimension of the local ring of $S$ at the image of $r$.

In fact, with the analysis of the reverse construction below,
it is possible to determine an \'etale local form of the
defining equation at $r$:
\[ u^2-v^2w^2=t^2. \]
Then, if we blow up
the locus $t=u=vw=0$, we find one chart with $t=ut_1$,
where $u=0$ defines the exceptional divisor, and defining equations
\begin{gather*}
vw = ux, \\
1-x^2 = t_1^2.
\end{gather*}
The fiber over $t=0$ consists of the exceptional divisor (two components)
and the proper transforms of components
\[ t_1=0,\,\,u=vw,\,\,x=1
\qquad\text{and}\qquad
t_1=0,\,\,u=-vw,\,\,x=-1, \]
and two points
\[ t_1=\pm 1,\,\,u=v=w=x=0, \]
at which the total space has $\mathsf{A}_1$-singularities.

The reverse construction needs to be checked over $D_1$ and $D_3$.
Let $r$ be a singular point of a Type I fiber of $\pi$.
With $D_1$ locally defined by $f=0$, we claim that there is an
\'etale local equation for $X$ at $r$ of the form
\[ x_1^2+x_2^2+x_3^2=f, \]
in affine $3$-space over the base as in \cite[\S 6]{KTsurf}.
The relative singular locus is, locally, a copy of $D_1$.
Denoting its ideal sheaf by $\cI$, we have $\cI/\cI^2$ locally free of
rank $3$, and we have an exact sequence as in \cite[(5.2)]{KTsurf}
with a fiber of $\cI/\cI^2$ at $r$
(a $3$-dimensional vector space over the residue field),
the Zariski cotangent space of $X$ at $r$ in the middle (dimension $n+2$),
and the Zariski cotangent space of $D_1$ at $r$ (dimension $n-1)$ on the right;
here $n$ denotes the dimension of the local ring of $S$ at the image $s$ of $r$,
with maximal ideal generated by $f$ and some $n-1$ further elements
$g_1$, $\dots$, $g_{n-1}$.
As in \cite[\S 5]{KTsurf},
the Zariski cotangent space of $X$ at $r$ surjects onto the Zariski cotangent space
of $\pi^{-1}(s)$ at $r$, with kernel spanned by
$g_1$, $\dots$, $g_{n-1}$ and $f$ mapping to zero in the
Zariski cotangent space of $X$ at $r$.
So $f$ maps to zero in every fiber of $\cI/\cI^2$, and hence
\begin{equation}
\label{eqn.fquadratic}
f\in \cI^2.
\end{equation}
Furthermore, the fiber of $\cI/\cI^2$ at $s$ maps isomorphically to the
Zariski cotangent space of $\pi^{-1}(s)$ at $r$, so we can let
$x_1$, $x_2$, $x_3$ be elements that locally generate $\cI/\cI^2$
and, as in loc.\ cit., use these elements to write $X$, \'etale locally,
as a hypersurface in affine $3$-space over the base.
Now \eqref{eqn.fquadratic} supplies a local defining equation
\[ f=\sum_{i,j=1}^3 a_{ij}x_ix_j, \]
where $(a_{ij})$ is a symmetric matrix with entries in the
coordinate ring of (an \'etale neighborhood of) the base.
The matrix is invertible since it has full rank over all points of the base.
This gives the desired \'etale local defining equation.
It is now straightforward to verify the reverse construction over $D_1$.

The reverse construction over $D_3$ is straightforward once we verify an
\'etale local equation of the form
\[ u^2-v^2w^2=f, \]
where $f$ is a local defining equation of $D_3$ at a closed point $s\in S$
and $u\pm vw$ are local defining equations of the components of the
fiber over $D_3$.
We start by passing to an \'etale affine neighborhood of $s$ in $S$
where the components of the fiber over $D_3$ are defined, as are the
components of the relative singular locus $Z$.
We let $r$ denote the singular point of $Z$ in the
fiber over $s$ and let
$u'$ and $u''$ be local defining equations at $r$ of the components
of the fiber over $D_3$, with
\begin{equation}
\label{eqn.uprimeudoubleprime}
f=u'u''.
\end{equation}
As well, we take $v$ to be a local function,
vanishing on one component of $Z$ and
restricting on the other component to
a local defining equation for the singular locus of $Z$ (a copy of $D_3$),
and we take $w$ to be an analogous local function, where the roles
of the two components of $Z$ are exchanged.
Letting $\cI$ denote the ideal sheaf of $Z$,
and denoting the maximal ideals corresponding to $s$ and $r$ by
$\mathfrak{m}$ and $\mathfrak{n}$, respectively,
we have the following right exact sequences of vector spaces over the
residue field $\kappa$ at $r$ (which is also the reside field at $s$):
\begin{gather}
(\cI/\cI^2)\otimes \kappa\to \mathfrak{n}/\mathfrak{n}^2\to
\mathfrak{n}/(\cI+\mathfrak{n}^2)\to 0, \label{eqn.D3exa1} \\
\mathfrak{m}/\mathfrak{m}^2\to \mathfrak{n}/\mathfrak{n}^2\to
\mathfrak{n}/(\mathfrak{m}\cO_{X,r}+\mathfrak{n}^2)\to 0. \label{eqn.D3exa3}
\end{gather}
By abuse of notation, $\cO_{X,r}$ denotes the structure sheaf at $r$
of $X$, pulled back over the \'etale neighborhood of $S$.
We let $n$ denote the dimension of $\cO_{S,s}$, with
$\mathfrak{m}$ generated by $f$ and additional local functions
$g_1$, $\dots$, $g_{n-1}$.

The Zariski cotangent space $\mathfrak{n}/\mathfrak{n}^2$ has dimension
$n+2$.
By \eqref{eqn.uprimeudoubleprime}, $f$ maps to $0$ in $\mathfrak{n}/\mathfrak{n}^2$
in \eqref{eqn.D3exa3}, where the space on the right,
the Zariski cotangent space of the fiber over $r$, has basis
$u'$, $v$, $w$;
alternatively, $u''$, $v$, $w$ is a basis.
It follows that $\mathfrak{n}/\mathfrak{n}^2$ has basis
\[ g_1,\dots,g_{n-1},u',v,w. \]
By \eqref{eqn.D3exa1}, noting that $Z$ is locally the
complete intersection defined by $u'$ and $u''$,
we have $\cI/\cI^2$ locally free of rank $2$, and
the images of generators $u'$ and $u''$ in
$\mathfrak{n}/\mathfrak{n}^2$ are equal up to scale.
Adjusting $u''$ (and $f$) by units, we may suppose that
$u'$ and $u''$ are equal in $\mathfrak{n}/\mathfrak{n}^2$.
It follows that
\[ u'-u''\in \cI\cap \mathfrak{n}^2. \]

Now we claim that
\[ u'\mathfrak{n}+(vw)=\cI\cap \mathfrak{n}^2. \]
Since the left-hand side is evidently contained in the right-hand side, it
suffices by Nakayama's lemma to show that $u'\mathfrak{n}$ and $vw$ span
\begin{equation}
\label{eqn.stuffmodn}
(\cI\cap \mathfrak{n}^2)/\mathfrak{n}(\cI\cap \mathfrak{n}^2).
\end{equation}
We have an exact sequence
\begin{equation}
\label{eqn.Imeetnsquared}
0\to \cI\cap \mathfrak{n}^2\to \cI\to (\cI+\mathfrak{n}^2)/\mathfrak{n}^2\to 0,
\end{equation}
where the space on the right is isomorphic to $\kappa$.
Tensoring with $\kappa$ yields a long exact sequence of $\Tor$ spaces.
It is readily verified that the map
$\Tor_1(\cI,\kappa)\to \Tor_1((\cI+\mathfrak{n}^2)/\mathfrak{n}^2,\kappa)$
is zero,
and the connecting homomorphism from
\[ \Tor_1((\cI+\mathfrak{n}^2)/\mathfrak{n}^2,\kappa)\cong
\Tor_1(\kappa,\kappa)\cong \mathfrak{n}/\mathfrak{n}^2 \]
is given by multiplication by $u'$.
It follows that the space \eqref{eqn.stuffmodn} has dimension $n+3$, and the
image of the connecting homomorphism has basis
\begin{equation}
\label{eqn.uprimen}
u'g_1,\dots,u'g_{n-1},u'^2,u'v,u'w.
\end{equation}
Now it suffices to show that this image does not contain
the class of $vw$.
The space
\[ \mathfrak{n}^2/(\mathfrak{m}\cO_{X,r}+\mathfrak{n}^3) \]
has dimension $5$, basis $v^2$, $w^2$, $u'v$, $u'w$, $vw$, and there the
image of $u'\mathfrak{n}$ does not contain $vw$.
So the claim is established.

In the right exact sequence obtained from
\eqref{eqn.Imeetnsquared} by tensoring with $\kappa$ we have the class of
$u'-u''$ in the middle mapping to zero on the right, hence
$u'-u''$ as an element of \eqref{eqn.stuffmodn} is not in the
span of \eqref{eqn.uprimen}.
From the claim and this observation it follows that
\[ u'-u''=u'h+vwk \]
for some $h\in \mathfrak{n}$ and $k\in \cO_{X,r}^\times$.
With
\[ u:=2k^{-1}u''+vw, \]
we compute
\[ u^2-v^2w^2=4(1-h)k^{-2}f, \]
which (after adjusting $f$ by a unit) is the desired local equation.

We conclude by using the case of unramified degree $2$ cover,
treated at the beginning of the proof, to associate to the
involution surface bundle over $\sqrt{([T/\mu_2],\mathcal{G}_3)}$,
which no longer has fibers of Type I or Type III,
a regular conic bundle over $\sqrt{(T,D_3)}$
with smooth fibers over the complement of $\psi^{-1}(D_{24})$.
By applying \cite[Prop.\ 3.1]{KTsurf} over the complement of
$\psi^{-1}(D_{24})$, this is converted to a regular conic bundle
over $T$.
\end{proof}

The following result gives an alternative description of the involution surface
in a simple setting.

\begin{prop}
\label{prop.easyD2}
Let $S$ be a regular scheme, such that $2$ is invertible in the
local rings of $S$, and let $D=D_2$ be a regular principal divisor, defined by
the vanishing of a regular function $f$ on $S$.
Then the construction of Theorem \ref{thm.mdisb},
applied to the degree $2$ cover $S\sqcup S\to S$ and conic bundle
$C_0\sqcup C_0$, where
\[ C_0:\,\,\,\, z_0z_2-fz_1^2=0\qquad\text{in}\qquad S\times \PP^2, \]
yields an involution surface bundle
\[ X\subset S\times \PP^8 \]
which sits in a commutative diagram
\[
\xymatrix{
[S'\times \PP^1\times \PP^1/\mu_2] \ar@{-->}[r] \ar@{-->}[d] & [S'/\mu_2]\times_SX \ar[d] \\
C_0\times_SC_0 \ar[r]_{\mathrm{Segre}} & X
}
\]
where $S'=\Spec(\cO_S[t]/(t^2-f))$ with $\mu_2$-action on $S'\times \PP^1\times \PP^1$ by
\[ (t,u:v,u':v')\mapsto (-t,-u:v,-u':v'), \]
the top map is the rational map sending
$(t,u:v,u':v')$ to
\[
(t^2u^2u'^2\!:\!tu^2u'v'\!:\!t^2u^2v'^2\!:\!tuvu'^2\!:\!uvu'v'\!:\!tuvv'^2\!:\!
t^2v^2u'^2\!:\!tv^2u'v'\!:\!t^2v^2v'^2),
\]
the left-hand map is the rational map given by
\[ (t,u:v,u':v')\mapsto (tu^2:uv:tv^2,tu'^2:u'v':tv'^2), \]
the right-hand map is projection,
and the bottom map is induced by the Segre embedding $\PP^2\times \PP^2\to \PP^8$.
\end{prop}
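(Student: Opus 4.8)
The plan is to verify the commutativity of the diagram and the stated properties of the maps by direct computation in local coordinates, leveraging the explicit form of the construction in Theorem \ref{thm.mdisb} specialized to the case $D_1=D_3=D_4=\emptyset$, so only a Type II degeneration along $D_2$ occurs. First I would recall that, since $D_1$ and $D_3$ are empty, the construction of Theorem \ref{thm.mdisb} is simply restriction of scalars along the trivial degree $2$ cover $S\sqcup S\to S$ applied to $C_0\sqcup C_0$; that is, $X$ is the Weil restriction of $C_0$ along $S\sqcup S\to S$, which set-theoretically is the fiber product $C_0\times_S C_0$, embedded in $S\times\PP^2\times\PP^2$ and then, via Segre, in $S\times\PP^8$. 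This explains the bottom row of the diagram: $C_0\times_S C_0\to X$ is literally the Segre embedding composed with the identification of $X$ as the image. The content is thus to check that the displayed nine quadratic monomials on the right-hand side of the ``top map'' are exactly the pullbacks of the Segre coordinates under the left-hand map, and that the resulting square commutes.

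Next I would unwind the left-hand and top maps. Write $C_0$ in coordinates $(z_0:z_1:z_2)$ with $z_0z_2=fz_1^2$; a rational parametrization of $C_0$ (over the locus where $f$ is a square, or after adjoining $t$ with $t^2=f$) is $(z_0:z_1:z_2)=(tu^2:uv:tv^2)$, which indeed satisfies $z_0z_2=t^2u^2v^2=f(uv)^2=fz_1^2$. So the left-hand map $[S'\times\PP^1\times\PP^1/\mu_2]\dashrightarrow C_0\times_S C_0$ sending $(t,u:v,u':v')$ to $((tu^2:uv:tv^2),(tu'^2:u'v':tv'^2))$ is well-defined: one checks it is invariant under the stated $\mu_2$-action $(t,u:v,u':v')\mapsto(-t,-u:v,-u':v')$, since $t u^2$, $uv$, $tv^2$ are each fixed (for the first factor, $t\mapsto -t$ and $u^2\mapsto u^2$ kills the sign in $tu^2$ and $uv\mapsto uv$; similarly for the others). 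Then composing with the Segre embedding $\PP^2\times\PP^2\to\PP^8$, $(z_0:z_1:z_2;z_0':z_1':z_2')\mapsto(z_iz_j')_{i,j}$, and substituting, one obtains precisely the nine monomials $(t^2u^2u'^2:tu^2u'v':t^2u^2v'^2:tuvu'^2:uvu'v':tuvv'^2:t^2v^2u'^2:tv^2u'v':t^2v^2v'^2)$ listed as the top map. This is a one-line substitution and establishes that the outer square commutes once we know the right-hand vertical map is the projection $[S'/\mu_2]\times_S X\to X$ and the top-left object maps to $[S'/\mu_2]\times_S X$ compatibly.

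For the right-hand map and the upper object, I would note that $[S'\times\PP^1\times\PP^1/\mu_2]$ maps to $[S'/\mu_2]$ (quotient of the projection $S'\times\PP^1\times\PP^1\to S'$, which is $\mu_2$-equivariant), and the composite $[S'\times\PP^1\times\PP^1/\mu_2]\dashrightarrow X$ given by the top map clearly factors through $S$; pairing these gives the map to $[S'/\mu_2]\times_S X$, and the right-hand vertical is then the second projection. That the top-left-to-bottom-right compositions agree is exactly the monomial identity just checked. Finally I would remark that, away from $f=0$, $S'\to S$ is étale of degree $2$ and $[S'/\mu_2]\cong S$, the $\mu_2$-quotient $[S'\times\PP^1\times\PP^1/\mu_2]$ is the restriction-of-scalars $\PP^1\times\PP^1$-bundle, and all four maps restrict to isomorphisms or the Segre embedding there, which pins down that the diagram is the one produced by the construction of Theorem \ref{thm.mdisb}; over $f=0$ one checks the rational maps are defined away from the indeterminacy loci and the diagram still commutes by continuity (density of the étale locus).

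The main obstacle I anticipate is bookkeeping rather than conceptual: one must be careful that the ``construction of Theorem \ref{thm.mdisb}'' applied to $S\sqcup S\to S$ with $C_0\sqcup C_0$ really produces $X$ together with \emph{this} particular $\mu_2$-presentation and \emph{these} particular rational maps — i.e., that the chosen parametrization $(tu^2:uv:tv^2)$ of $C_0$ is the one implicit in the restriction-of-scalars recipe and the Grassmannian-of-conic-planes description used in the reverse direction of that proof. Concretely, the subtlety is matching the auxiliary coordinate $t$ (with $t^2=f$) and the $\mu_2$-action $(t,u:v)\mapsto(-t,-u:v)$ to the structure sheaf computation in the proof of Theorem \ref{thm.mdisb}; once that identification is made explicit, the commutativity of the square and the well-definedness of all four maps follow from the single substitution of the parametrization into the Segre monomials. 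I would therefore organize the proof as: (1) identify the construction with Weil restriction and the $\mu_2$-presentation; (2) check $\mu_2$-invariance of the left-hand map; (3) substitute into Segre to get the top map and conclude commutativity; (4) note the étale-locus restriction to confirm these are the maps of Theorem \ref{thm.mdisb}.
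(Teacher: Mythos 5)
Your proposal is correct and takes essentially the same route as the paper, whose proof merely observes that restriction of scalars along the split cover $S\sqcup S\to S$ turns $C_0\sqcup C_0$ into $C_0\times_S C_0$ and declares the remaining verification (substituting the parametrization $(tu^2:uv:tv^2)$ into the Segre embedding) to be clear -- precisely the computation you spell out. One minor slip in your invariance check: under $(t,u:v,u':v')\mapsto(-t,-u:v,-u':v')$ the coordinates $tu^2$, $uv$, $tv^2$ are each negated rather than fixed, but since all three acquire the same sign the corresponding point of $\PP^2$ is unchanged, so the left-hand map is still $\mu_2$-invariant and your argument goes through.
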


\begin{proof}
The involution surface bundle is obtained from the regular conic bundle
$C:=C_0\sqcup C_0$ by restriction of scalars along $S\sqcup S\to S$.
Since $C$ is pulled back from $C_0$ over $S$, this yields $C_0\times_SC_0$.
The rest is clear.
\end{proof}

\section{Codimension 2 phenomena}
\label{sec:codim2}
In this section we exhibit involution surface bundles
$X\to \A^2$, such that
$X\times_{\A^2}(\A^2\smallsetminus \{0\})$ is a mildly degenerating simple involution surface bundle
with $D_i$ equal to the $x$-axis and $D_j$, to the $y$-axis, for some $i$ and $j$
(where $D_i$ taken to be the union of the two axes when $i=j$).
These will be used in the following section to exhibit good models
of general involution surface bundles over surfaces, over an
algebraically closed field of characteristic different from $2$.

Let $k$ be a field of characteristic different from $2$.
We let $S=\A^2_k$, with coordinates $x$ and $y$.
We take $i\in \{1,2,3,4\}$ and $j=2$.
When $i$ is odd, we take $T=\A^2$, with coordinates
$s$ and $y$, with $x=s^2$, and when
$i$ is even we take $T$ to be the disjoint union of two copies of $S$.
We will also consider the case $i=j=4$, with $T$ the disjoint union of
two copies of $S$, which will split into two subcases according to the
form of the Type IV marking.

The strategy, when the machinery of Theorem \ref{thm.mdisb} does not
directly lead to a construction of $X\to S$, will be to
write down a model $\rho\colon P'\to S'$ of
$\pi\colon X^\circ\to S^\circ:=S\smallsetminus \{s_0\}$
over some
$S'$, isomorphic to $\A^2$,
with a finite flat covering $\tau\colon S'\to S$ and $\tau^{-1}(0)=\{s_0\}$.
Here, $X^\circ\to S^\circ$ is determined from a given cover $T^\circ\to S^\circ$ and
regular conic bundle $C^\circ\to T^\circ$ by Theorem \ref{thm.mdisb},
and $P'$ is supplied with a birational map to $S'\times_SX^\circ$,
which restricts to an isomorphism over the generic point of $S'$.
This determines $\pi_*(\omega_{X^\circ/S^\circ}^\vee)|_{P'}\to L$ for some
line bundle $L$ on $P'$.
By abuse of notation, we write the symbol for restriction
to $S'$ (resp., to a scheme over $S'$)
to denote pullback to $\tau^{-1}(S^\circ)$, followed by direct image in
$S'$ (resp., and further pullback);
the latter operation sends locally free coherent sheaves to
locally free coherent sheaves since
$\tau^{-1}(S^\circ)=S'\smallsetminus\{s_0\}$.
In our case $\rho$ will be smooth and
the isomorphism type of $L$ will be determined by its
restriction over the generic point of $S'$, from which we will have
$L\cong \omega_{P'/S'}^\vee$.
By adjunction, we will obtain
\begin{equation}
\label{eqn.rhostar}
\pi_*(\omega_{X^\circ/S^\circ}^\vee)|_{S'}\to \rho_*\omega_{P'/S'}^\vee.
\end{equation}
We will compute the image of \eqref{eqn.rhostar} and then, with linear algebra,
a collection of polynomial equations vanishing on the image of
\[ P' \dashrightarrow \PP(\pi_*(\omega_{X^\circ/S^\circ}^\vee)^\vee|_{S'}). \]
In every case these descend to yield equations of a subscheme $X$ of
projective space, flat over $S$.
Having $X$ flat over $S$, containing, and
over the generic point of $S$ agreeing with, the anticanonically embedded
$X^\circ$, it follows that
$X$ is the closure of $X^\circ$ under its
anticanonical embedding.
It will emerge, in every case, that $X$ is normal and
locally a complete intersection; as a consequence,
$\omega_{X/S}^\vee$ is locally free and defines the anticanonical embedding.

\subsection{I meets II}
\label{ss.ImeetsII}
We start with $S=\Spec(k[x,y])$ with $D_1:x=0$, $D_2:y=0$, degree $2$ cover $T=\Spec(k[s,y])$ with
$x=s^2$, and conic bundle $C\to T$, pulled back from $C_0\to S$, where
$C_0$ is defined by
\[ z_0z_2=yz_1^2 \]
in $S\times \PP^2$.
The recipe of Theorem \ref{thm.mdisb} starts with the restriction of
scalars of $C$ along
\[ T\to [T/\mu_2], \]
where $\mu_2$ acts by $(s,y)\mapsto (-s,y)$.
This gives
\[ [C_0\times_TC_0/\mu_2], \]
where $\mu_2$ acts on $T$ as before and by swapping the two factors $C_0$.
Then, we have to blow-up, contract, and descend over $D_1$.

Away from $D_1$, Proposition \ref{prop.easyD2} is applicable,
at least if we forget about $\mu_2$-equivariance.
We obtain a commutative diagram
\begin{equation}
\begin{split}
\label{eqn.ImeetsIIdiagram}
\xymatrix{
[T'\times \PP^1\times \PP^1/\mu_2] \ar@{-->}[r] \ar@{-->}[d] & [T'/\mu_2]\times \PP^8 \ar[d] \\
C_0\times_TC_0 \ar[r] & T\times \PP^8
}
\end{split}
\end{equation}
Here $T'$ is the cover of $T$, gotten by adjoining $t$ with $t^2=y$,
and $\mu_2$ acts by $(s,t)\mapsto (s,-t)$ and on each $\PP^1$-factor by
$(u:v)\mapsto (-u:v)$.
We take
\[ P':=T'\times \PP^1\times \PP^1 \]
with
\[ P'\dashrightarrow [C_0\times_TC_0/\mu_2]\eqdashto X^\circ. \]
Over the complement of $D_1$, diagram
\eqref{eqn.ImeetsIIdiagram} supplies equations for the image of
$\pi_*(\omega_{X^\circ/S^\circ}^\vee)|_{T'}\to H^0(P',\cO_{P'}(2,2))$:
\begin{align*}
k[s,s^{-1},t]\langle &t^2u^2u'^2,tu^2u'v',t^2u^2v'^2,\\
&tuvu'^2,uvu'v',tuvv'^2, \\
&t^2v^2u'^2,tv^2u'v',t^2v^2v'^2\rangle.
\end{align*}

Over the complement of $t=0$ the left-hand map in \eqref{eqn.ImeetsIIdiagram}
is an isomorphism.
We therefore can carry out the recipe of Theorem \ref{thm.mdisb} with $P'$.
With standard coordinates $uv$, $uv'$, $u'v$, $u'v'$ on
$\PP^1\times \PP^1$,
we perform a linear change of coordinates so that the defining equation
$uv'-u'v=0$ of the diagonal becomes a coordinate hyperplane.
The required blow-up is readily computed, and the contraction to the image
in $\PP^8$ is given by the following space of global sections of the
dual of the relative dualizing sheaf:
\begin{align*}
k[s,t,&t^{-1}]\langle s^2u^2u'^2,s^2(uv'+u'v)^2,(uv'-u'v)^2,s^2v^2v'^2,\\
&suu'(uv'-u'v),svv'(uv'-u'v),s(u^2v'^2-u'^2v^2),\\
&s^2uu'(uv'+u'v),s^2vv'(uv'+vu')\rangle
\end{align*}
Over all of $T'$ the space of global sections is given by the intersection:
\begin{align*}
k[s,t]\langle &s^2uvu'v',
stuu'(uv'-u'v),
stvv'(uv'-u'v),\\
&s^2tuu'(uv'+u'v),s^2tvv'(uv'+u'v),
t^2(uv'-u'v)^2,\\
&st^2(u^2v'^2-u'^2v^2),s^2t^2u^2u'^2,s^2t^2v^2v'^2\rangle.
\end{align*}

With linear algebra we find
\[ \frac{9\cdot 10}{2}-h^0(\PP^1\times \PP^1,(\omega_{\PP^1\times \PP^1}^\vee)^2)=45-25=20 \]
homogeneous polynomials of degree $2$
in $x_0$, $\dots$, $x_8$ with coefficients
in $k[s^2,t^2]$, of degree at most $1$ in $s^2$ and degree at most $1$
in $t^2$, vanishing on the
image of the map to $\A^2\times \PP^8$ given by the basis elements listed above.
These polynomials $f_1$, $\dots$, $f_{20}$ are are listed in Table \ref{ImeetsII}.
We introduce $\tilde x_1:=x_1+x_2$ and $\tilde x_5:=x_5+x_7+x_8$ and
verify just as in \cite[Lem.\ 9.1]{KTsurf} that
$k[s,t,x_0,\dots,x_8]/(f_1,\dots,f_{20})$ is free as a module over
$k[s,t,x_0,\tilde x_1,\tilde x_5]$ with basis
$1$, $x_2$, $x_3$, $x_4$, $x_6$, $x_7$, $x_8$, $x_3x_7$
for any field $k$ of characteristic different from $2$.
The fiber over $s=t=0$ is the union of a nonreduced quadric surface
and two quadric cones, and
\[ X_{\mathrm{I,II}}:=X \]
is smooth outside of a curve of $\mathsf{A}_1$-singularities over $D_2$ at
the point $(1:0:0:0:0:0:0:0:0)$.

\begin{table}
\[
\begin{array}{ccc}
x_0x_5-x_1x_2 &&                          s^2x_2^2+4x_0x_8-x_4^2 \\
x_0x_6-x_1x_4 &&                       x_2x_6-x_4x_5 \\
t^2x_0x_2-x_1x_8 &&                          4t^2x_0x_2+s^2x_2x_5-x_4x_6 \\
x_0x_6-x_2x_3 &&                          t^2x_0x_3-x_4x_7 \\
t^2x_0x_1-x_2x_7 &&                       t^2x_1^2-x_5x_7 \\
s^2x_1^2+4x_0x_7-x_3^2 &&                 t^2x_2^2-x_5x_8 \\
4t^2x_0^2+s^2x_1x_2-x_3x_4 &&             4t^2x_0x_5+s^2x_5^2-x_6^2 \\
x_1x_6-x_3x_5 &&                          t^2x_1x_3-x_6x_7 \\
4t^2x_0x_1+s^2x_1x_5-x_3x_6 &&            t^2x_2x_4-x_6x_8 \\
t^2x_0x_4-x_3x_8 &&                       s^2t^2x_0x_5-t^2x_3x_4+4x_7x_8
\end{array}
\]
\caption{Equations for the case of Type I meeting Type II.}
\label{ImeetsII}
\end{table}

\subsection{II meets II}
\label{ss.IImeetsII}
Here we have $D_2$ defined by $xy=0$ on $S=\Spec(k[x,y])$ and
$T=S\sqcup S$.
So we take $C_0$ to be the regular conic bundle on $S$ defined by
\[ xz_0^2+yz_1^2-z_2^2=0 \]
and, defining $C$ to be $C_0\times_ST$, obtain $X\to S$ as the
restriction of scalars of $C$ along $T\to S$:
\[ X_{\mathrm{II,II}}:=C_0\times_SC_0. \]
This has $\mathsf{A}_1$-singularities along four curves:
\begin{gather*}
x=z_1=z_2=z'_1=z'_2=0,\qquad\qquad
y=z_0=z_2=z'_0=z'_2=0,\\
x=y=z_2=z'_2=0,\,\,[z_0:z_1]=[\pm z'_0:z'_1].
\end{gather*}
There are two points of intersection of the curves,
where there is a more complicated toric sigularity:
\begin{align*}
&y(z_1+z'_1)(z_1-z'_1)=(z_2+z'_2)(z_2-z'_2),&&\text{on the chart $z_0=z'_0=1$}, \\
&x(z_0+z'_0)(z_0-z'_0)=(z_2+z'_2)(z_2-z'_2),&&\text{on the chart $z_1=z'_1=1$}.
\end{align*}

\subsection{III meets II}
\label{ss.IIImeetsII}
Here we assume that $k$ contains a primitive fourth root of unity $i$.
We have $S=\Spec(k[x,y])$ with $D_2:y=0$, $D_3:x=0$, degree $2$ cover
$T=\Spec(k[s,y])$ with $x=s^2$, and conic bundle
$C\to T$ defined by
\[ sz_0^2+yz_1^2-z_2^2=0 \]
Now the recipe of Theorem \ref{thm.mdisb} starts with an
application of \cite[Prop.\ 3.1]{KTsurf} to pass from $C\to T$ to
$P\to \sqrt{(T,D_3)}$.
Explicitly this is given by the rational map
\[ (rz_0+z_2:z_1:-rz_0+r_2), \]
where $r^2=s$,
to $[U\times \PP^2/\mu_2]$, where $U=\Spec(k[r,y])$ and
$\mu_2$ acts by $(r,y)\mapsto (-r,y)$ and by permutation of coordinates
on $\PP^2$.
With $w_0$, $w_1$, $w_2$ as homogeneous coordinates, $P$ is defined by
\[ w_0w_2=yw_1^2. \]
As in \S \ref{ss.ImeetsII}, Proposition \ref{prop.easyD2} is applicable,
leading to a commutative diagram
\begin{equation}
\begin{split}
\label{eqn.IIImeetsIIdiagram}
\xymatrix{
[U'\times \PP^1\times \PP^1/\mu_2] \ar@{-->}[r] \ar@{-->}[d] & [U'/\mu_2]\times \PP^8 \ar[d] \\
P\times_UP \ar[r] & U\times \PP^8
}
\end{split}
\end{equation}
where $U'=\Spec(k[r,t])$, with $t^2=y$ and $\mu_2$ acting on $U'$
by $(r,t)\mapsto (r,-t)$ and by $(u:v)\mapsto (-u:v)$ on each $\PP^1$ factor.

As in \S \ref{ss.ImeetsII} we obtain equations for global sections of the
dual of the relative dualizing sheaf:
\begin{align*}
k[r,r^{-1},t]\langle &t^2u^2u'^2,tu^2u'v',t^2u^2v'^2,\\
&tuvu'^2,uvu'v',tuvv'^2, \\
&t^2v^2u'^2,tv^2u'v',t^2v^2v'^2\rangle.
\end{align*}
Introducing
\[
\tilde u:=u+v,\qquad
\tilde v:=u-v,\qquad
\tilde u':=iu'+v',\qquad
\tilde v':=u'+iv',
\]
this becomes
\begin{align*}
k[&r,r^{-1},t]\langle t^2\tilde u\tilde v\tilde u'\tilde v',t\tilde u\tilde v(\tilde u'^2+\tilde v'^2),t^2\tilde u\tilde v(-\tilde u'^2+\tilde v'^2), \\
&t(-\tilde u^2+\tilde v^2)\tilde u'\tilde v',
(-\tilde u^2+\tilde v^2)(\tilde u'^2+\tilde v'^2),
t(-\tilde u^2+\tilde v^2)(-\tilde u'^2+\tilde v'^2), \\
&t^2(\tilde u^2+\tilde v^2)\tilde u'\tilde v',
t(\tilde u^2+\tilde v^2)(\tilde u'^2+\tilde v'^2),
t^2(\tilde u^2+\tilde v^2)(-\tilde u'^2+\tilde v'^2)\rangle.
\end{align*}
With respect to the new coordinates, the action of $\mu_2$ on
$\PP^1\times \PP^1$ in \eqref{eqn.IIImeetsIIdiagram} is by
\[ (\tilde u:\tilde v,\,\tilde u':\tilde v')\mapsto (\tilde v:\tilde u,\,-\tilde v':\tilde u'). \]

Over the complement of $D_2$, the left-hand map in
\eqref{eqn.IIImeetsIIdiagram} is an isomorphism, induced by the isomorphism
of smooth $\PP^1$-fibrations that is obtained by restricting
\[ [U'\times \PP^1/\mu_2\times \mu_2]\dashrightarrow P,\qquad
(r,t,u:v)\mapsto (r,t^2,tu^2:uv:tv^2), \]
over the complement of $D_2$; on the left, the first factor $\mu_2$ acts by
$(r,t)\mapsto (-r,t)$ on $U'$ and by $(u:v)\mapsto (v:u)$ on $\PP^1$, and the second,
by $(r,t)\mapsto (r,-t)$ on $U'$ and by $(u:v)\mapsto (-u:v)$ on $\PP^1$.
We carry out the construction of Theorem \ref{thm.mdisb} over the
complement of $D_2$ using this isomorphism.
We also use the coordinates $\tilde u$, $\tilde v$ introduced above,
with respect to which the first and second factors $\mu_2$ act by
\[
(r,t,\tilde u:\tilde v)\mapsto (-r,t,-\tilde u:\tilde v) \qquad\text{and}\qquad
(r,t,\tilde u:\tilde v)\mapsto (r,-t,\tilde v:\tilde u),
\]
respectively.

The computation of the restriction of scalars of $[U'\times \PP^1/\mu_2\times \mu_2]$
along $[U'/\mu_2\times \mu_2]\to [U'/\mu_4\times \mu_2]$ reduces to
the computation of the restriction of scalars of $[\PP^1/\mu_2\times \mu_2]$
along $B(\mu_2\times \mu_2)\to B(\mu_4\times \mu_2)$.
An analogous computation was performed in Section \ref{sec:generic}; we just state the result as
\[ [U'\times \PP^1\times \PP^1/\mu_4\times \mu_2], \]
with respective actions
\[
(\tilde u:\tilde v,\,\tilde u':\tilde v')\mapsto (i\tilde u':\tilde v',\,i\tilde u:\tilde v),\qquad
(\tilde u:\tilde v,\,\tilde u':\tilde v')\mapsto (\tilde v:\tilde u,\,-\tilde v':\tilde u').
\]

We carry out the blow-up and descent steps of
Theorem \ref{thm.mdisb} over $D_3$, following
\cite[Sect.\ 8]{KTsurf}.
First, we blow up $(0,0)$ and $(\infty,\infty)$ over $D_3$,
with respective exceptional divisors $E_1$ and $E_2$,
and semiample line bundle $\cO(2,2)(-2E_1-2E_2)$, leading to the
first contraction
\[ \varphi\colon B\ell_{\{(0,\ne 0,0,0)\}\cup \{(0,\ne 0,\infty,\infty)\}}(\Pi)\to
\widetilde{\Pi} \]
and descent
\[
\xymatrix{
\widetilde{\Pi}\ar[r]\ar[d]_{\chi}\ar@{}[dr]|{\square} &
[(U'\smallsetminus \{t=0\})/\mu_4\times \mu_2] \ar[d] \\
\widetilde{\Pi}_0 \ar[r] &
[(T'\smallsetminus \{t=0\})/\mu_2\times \mu_2]
}
\]
For some line bundle $L$ we have $\varphi^*L\cong\cO(2,2)(-2E_1-2E_2)$,
and there is a line bundle $L_0$ on $\widetilde{\Pi}_0$ that pulls back to $L$.
The second blow-up, of a locus $W$ in $\widetilde{\Pi}_0$ consisting of
two lines in the fibers over $D_3$, yields exceptional divisors
$\widetilde{E}_1$ and $\widetilde{E}_2$.
Then $L_0(-\widetilde{E}_1-\widetilde{E}_2)$ is
semiample and leads to the second contraction.
Its sections correspond to sections of $L$, vanishing on
$\chi^{-1}(W)$.
These are readily computed and lead to
\begin{align*}
k[r,t,t^{-1}]\langle &r^4\tilde u^2\tilde u'^2,
r\tilde u\tilde u'(\tilde u\tilde v'+\tilde u'\tilde v),
r^3\tilde u\tilde u'(\tilde u\tilde v'-\tilde u'\tilde v), \\
&r^2(\tilde u\tilde v'+\tilde u'\tilde v)^2,
-\tilde u^2\tilde v'^2+\tilde v^2\tilde u'^2,
r^3\tilde v\tilde v'(\tilde u\tilde v'+\tilde u'\tilde v), \\
&r^2(\tilde u\tilde v'-\tilde u'\tilde v)^2,
r\tilde v\tilde v'(\tilde u\tilde v'-\tilde u'\tilde v),
r^4v^2v'^2 \rangle.
\end{align*}

As in \S \ref{ss.ImeetsII}, the next step is to compute the
intersection of the two modules of sections of the dual of the
relative dualizing sheaf:
\begin{align*}
k[r,t]\langle &t^2(-u^2v'^2+v^2u'^2),
rt(uu'(uv'+u'v)+vv'(uv'-u'v)),\\
&rt^2(uu'(uv'+u'v)-vv'(uv'-u'v)),
r^2t(u^2v'^2+v^2u'^2),\\
&r^2t^2uu'vv',
r^3t(uu'(uv'-u'v)-vv'(uv'+u'v)),\\
&r^3t^2(uu'(uv'-u'v)+vv'(uv'+u'v)),\\
&r^4(-u^2+v^2)(u'^2+v'^2),
r^4t(u^2u'^2+v^2v'^2) \rangle.
\end{align*}
Relations $f_1$, $\dots$, $f_{20}$ are listed in Table \ref{IIImeetsII}.
As before, we verify that
$k[r,t,x_0,\dots,x_8]/(f_1,\dots,f_{20})$ is free as a module over
$k[r,t,x_0,x_1,x_7]$ with basis
$1$, $x_2$, $x_3$, $x_4$, $x_5$, $x_6$, $x_8$, $x_4x_5$
for any field $k$ of characteristic different from $2$.
The fiber over $r=t=0$ is a nonreduced scheme whose underlying reduced scheme
is a quadric cone.
The singularities in
\[ X_{\mathrm{III,II}}:=X \]
occur at
$(0:0:0:0:0:0:0:1:0)$ over $D_2$ and are of type
$\mathsf{D}_{\infty}$, an isolated line singularity \cite{siersma} with the
local analytic defining equation
\[ xz_0^2+z_1^2+z_2^2+z_3^2=0. \]

\begin{table}
\[
\begin{array}{ccc}
4x_0x_4+t^2x_1^2-x_2^2        &&     r^4x_0x_1+x_3x_6+2x_4x_5 \\
x_0x_5-2x_1x_4+x_2x_3         &&     r^4x_0x_2+t^2x_3x_5+2x_4x_6 \\
x_0x_6+t^2x_1x_3-2x_2x_4      &&     r^4x_1^2-x_3x_8+2x_4x_7 \\
2x_0x_8+x_1x_6+x_2x_5         &&     r^4x_0x_3-r^4x_1x_2-t^2x_3x_7+2x_4x_8 \\
2r^4x_0^2-2t^2x_0x_7+t^2x_1x_5+x_2x_6&&x_3x_8+2x_4x_7-x_5^2\\
x_0x_7-x_1x_5-x_3^2           &&     r^4x_1x_2-4x_4x_8-x_5x_6 \\
x_0x_8+x_2x_5+2x_3x_4         &&     r^4x_2^2+4t^2x_4x_7-2t^2x_5^2+x_6^2 \\
x_1x_8+x_2x_7+x_3x_5          &&     r^4x_1x_3+x_5x_8+x_6x_7 \\
2r^4x_0x_1-t^2x_1x_7-x_2x_8+x_3x_6&& 4r^4x_1x_4-r^4x_2x_3-t^2x_5x_7-x_6x_8 \\
r^4x_0^2-t^2x_3^2+4x_4^2      &&     2r^4x_1x_5+r^4x_3^2-t^2x_7^2+x_8^2
\end{array}
\]
\caption{Equations for the case of Type III meeting Type II.}
\label{IIImeetsII}
\end{table}

\subsection{IV meets II}
\label{ss.IVmeetsII}
Here we have $D_2:y=0$ and $D_4:x=0$ on $S=\Spec(k[x,y])$ and
$T=S\sqcup S$.
We take $C$ to be the regular conic bundle
\[ xz_0^2+yz_1^2-z_2^2=0 \]
on the first copy of $S$ and
\[ z_0^2+yz_1^2-z_2^2=0 \]
on the second copy of $S$.
Restriction of scalars yields
\[ X_{\mathrm{IV,II}}:\,\,\,\,xz_0^2+yz_1^2-z_2^2=0,\qquad z'^2_0+yz'^2_1-z'^2_2=0, \]
with singularities of type $\mathsf{D}_\infty$ at $(0:1:0,0:1:0)$ over $D_2$.

\subsection{IV meets IV}
\label{ss.IVmeetsIV}
With $D_4$ defined by $xy=0$ in $S=\Spec(k[x,y])$,
we divide into subcases according to
whether or not, on $T=S\sqcup S$,
the (closures of the) Type IV marked divisors meet.
If they meet, the outcome is a
product
\[ X'_{\mathrm{IV,IV}}:=\PP^1\times C_0, \]
where $C_0$ denotes the regular conic bundle $xz_0^2+yz_1^2-z_2^2=0$ over $\A^2$.
If not, then we get the locus
\[ X''_{\mathrm{IV,IV}}:\,\,\,\,xz_0^2+z_1^2-z_2^2=0,\qquad z'^2_0+yz'^2_1-z'^2_2=0. \]
In both cases, the total space is smooth.

\section{Existence of good models}
\label{sec.models}
We employ the involution surface bundles constructed in the previous section
as local models of involution surface bundles over surfaces.
We work over an algebraically closed field of characteristic different from $2$.

\begin{defi}
\label{defn.simple2}
Let $k$ be an algebraically closed field of characteristic different from $2$, and let
$S$ be a smooth surface over $k$.
We call an involution surface bundle $\pi\colon X\to S$ \emph{simple} if the
locus where $\pi$ has singular fibers is a simple normal crossing divisor
\[ D=D_1\cup D_2\cup D_3\cup D_4 \]
such that
\begin{itemize}
\item over the complement in $S$ of the singular locus $D^{\mathrm{sing}}$ of $D$,
$\pi$ is a mildly degenerating simple involution surface bundle with
singular fibers over $D_1$, $\dots$, $D_4$ as in Definition \ref{defn.simple};
\item $D_1$ and $D_3$ are smooth, disjoint from each other, and
disjoint from $D_4$; and
\item letting $\cO^h_{S,s}$ denote the henselization of a local ring of $S$ and
writing $\widehat{X}$ for $X\times_S\Spec(\cO^h_{S,s})$,
we have at all $s\in D^{\mathrm{sing}}$ that
$\widehat{X}$ is isomorphic to one of
\[
\widehat{X}_{\mathrm{I,II}}, \quad
\widehat{X}_{\mathrm{II,II}}, \quad
\widehat{X}_{\mathrm{III,II}}, \quad
\widehat{X}_{\mathrm{IV,II}}, \quad
\widehat{X}'_{\mathrm{IV,IV}}, \quad
\widehat{X}''_{\mathrm{IV,IV}}.
\]
\end{itemize}
\end{defi}

\begin{theo}
\label{thm.mainst}
Let $k$ be an algebraically closed field of characteristic different from $2$,
$S$ a smooth projective surface over $k$, and
\[ \pi\colon X\to S \]
a morphism of projective varieties whose generic fiber is an involution surface.
Then there exists a commutative diagram
\[
\xymatrix{
\widetilde{X} \ar@{-->}[r]^{\varrho_X} \ar[d]_{\tilde\pi} & X \ar[d]^{\pi} \\
\widetilde{S} \ar[r]^{\varrho_S} & S
}
\]
such that
\begin{itemize}
\item $\varrho_S$ is a proper birational morphism,
\item $\varrho_X$ is a birational map that restricts to an
isomorphism over the generic point of $S$,
\item $\tilde\pi$ is a simple involution surface bundle.
\end{itemize}
\end{theo}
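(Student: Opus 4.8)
The plan is to reduce Theorem \ref{thm.mainst} to a local problem over $S$, solved in two stages: first over the generic points of the discriminant divisor, then over the finitely many bad closed points, using the explicit local models from Section \ref{sec:codim2}. First I would record the setup: the generic fiber of $\pi$ is an involution surface over $k(S)$, so it has a discriminant extension $L/k(S)$ and a Brauer class $\beta\in\Br(L)[2]$. Let $\psi_0\colon T_0\to S_0$ be the normalization of $S$ (over the open locus $S_0$ where this is finite flat and $T_0$ regular) with respect to $L/k(S)$; after blowing up $S$ we may assume the branch locus of $T_0\to S$, the ramification locus of $\beta$, the Type IV marked locus, and their pairwise intersections are all simple normal crossings, and that $T_0$ extends to a regular scheme $T$ with $\psi\colon T\to S$ finite flat of degree $2$. (Here one invokes resolution of singularities and embedded resolution for curves on surfaces in characteristic $\ne 2$, which is classical.) At the generic point of each branch of the discriminant, the classification of degenerations over a DVR given in Section \ref{sec:generic} tells us which of the Types I--IV occurs, and this partitions $D$ into $D_1\cup D_2\cup D_3\cup D_4$, with a Type IV marking divisor $D_4'\subset T$ over $D_4$.

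Second I would produce, over the complement $S\smallsetminus D^{\mathrm{sing}}$ of the finitely many crossing points, a mildly degenerating simple involution surface bundle. For this, one uses Theorem \ref{thm.mdisb}: starting from the regular conic bundle $C\to T$ (which is the Brauer--Severi scheme of $B$, suitably resolved over the codimension-one ramification locus so as to become a regular conic bundle with mild singular fibers, exactly as in the treatment of conic bundles over surfaces in \cite{HKTconic}), the construction of Theorem \ref{thm.mdisb} outputs an involution surface bundle $X^\flat\to S\smallsetminus D^{\mathrm{sing}}$ with the prescribed singular-fiber types over $D_1,\dots,D_4$ and Type IV marking $D_4'$, and with controlled singularities along $s(D_2)$ as in Definition \ref{defn.simple}. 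By the uniqueness clause of Theorem \ref{thm.mdisb} (and since $X^\flat$ agrees with the anticanonical model of $X$ over the generic point of $S$ by Remark \ref{rem.relveryample}), $X^\flat$ is birational to $X$ over $S\smallsetminus D^{\mathrm{sing}}$, restricting to an isomorphism over the generic point.

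Third, and this is the main obstacle, one must extend $X^\flat$ across each crossing point $s\in D^{\mathrm{sing}}$. Here the global construction of Theorem \ref{thm.mdisb} no longer applies directly, because the hypotheses of Definition \ref{defn.simple} (pairwise disjoint $D_i$) fail. The strategy is: after passing to the henselization $\cO^h_{S,s}$, the pair (cover, conic bundle) is, up to the allowed coordinate changes, one of a finite list of standard shapes indexed by the unordered pair of types meeting at $s$ — and, crucially, only type $2$ can meet anything nontrivially in the end (one must first argue, by further blow-ups of $\widetilde S$ if necessary, that we can separate $D_1$, $D_3$, $D_4$ from one another, so that the surviving crossings are $\{i,2\}$ for $i\in\{1,2,3,4\}$ together with the two $\{4,4\}$ subcases). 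For each such local shape, Section \ref{sec:codim2} has already written down an explicit involution surface bundle $X_{\mathrm{I,II}}$, $X_{\mathrm{II,II}}$, $X_{\mathrm{III,II}}$, $X_{\mathrm{IV,II}}$, $X'_{\mathrm{IV,IV}}$, $X''_{\mathrm{IV,IV}}$ over $\A^2$ whose restriction over $\A^2\smallsetminus\{0\}$ is the mildly degenerating simple bundle with the right data. One then glues: over $\widehat X_{S,s}:=X\times_S\Spec(\cO^h_{S,s})$ the anticanonical model matches one of these local models (because both restrict, over the punctured henselian neighborhood, to the same mildly degenerating simple bundle, and both are the anticanonical closure of that common generic fiber, hence agree by the normality/l.c.i. rigidity established at the end of Section \ref{sec:codim2}), and this pins down $\widehat X$ to the required isomorphism type in the last bullet of Definition \ref{defn.simple2}. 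Patching these henselian-local descriptions together with $X^\flat$ over the open part gives a global simple involution surface bundle $\tilde\pi\colon\widetilde X\to\widetilde S$; by construction it is birational to $X$ over $\widetilde S$, restricting to an isomorphism over the generic point of $S$, and $\varrho_S\colon\widetilde S\to S$ is the composite of the blow-ups used, a proper birational morphism.

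The delicate point throughout — and what I expect to consume most of the work — is the bookkeeping for the blow-ups of $S$: one must simultaneously make the discriminant, the $\beta$-ramification, and the Type IV locus into snc divisors, ensure the degree-$2$ cover $T\to S$ stays regular (which fails over certain tangencies and forces extra blow-ups of the kind already analyzed in \cite{KTsurf}), and check that after all of this the only crossings remaining are of the six admissible local shapes. Verifying that each admissible crossing really does produce one of the six tabulated models — i.e., matching the henselian-local anticanonical model of $X$ with the explicit equations of Section \ref{sec:codim2} — is then a finite check, one case per shape, using the freeness-over-a-polynomial-subring computations already carried out there (as in \cite[Lem.\ 9.1]{KTsurf}).
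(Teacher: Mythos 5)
Your proposal is correct and follows essentially the same route as the paper: rigidify by a degree-$2$ cover after resolution, arrange a regular (standard) conic bundle on the cover compatibly with a blow-up $\widetilde{S}\to S$, apply Theorem \ref{thm.mdisb} away from the crossing points, and fill in over the finitely many remaining points via the anticanonical closure matched against the explicit local models of Section \ref{sec:codim2}. The paper compresses this last step into a citation of the analogous argument in \cite{KTsurf} (and uses Sarkisov's standard conic bundles where you invoke resolution of the Brauer--Severi scheme), but the substance is the same; note only that the separation of $D_4$ from $D_1\cup D_3$ is automatic once $T$ is regular and the divisors cross transversally, since a divisor meeting the branch locus has inert discriminant extension and hence cannot be of Type IV.
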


\begin{proof}
By embedded resolution of singularities, we may suppose that
$X\to S$ is rigidified by a double cover $T\to S$,
branched over a simple normal crossing divisor.
Blowing up the singular locus of the branch locus, we obtain a
smooth branch locus.
The conic, corresponding to $X\times_S\Spec(k(S))$,
extends to a standard conic bundle on some smooth projective $\widetilde{T}$
with birational morphism $\widetilde{T}\to T$ \cite{sarkisovconicbundle}.
We remark, as well, that such a model exists as well over
$\widetilde{T}'$ for any smooth projective $\widetilde{T}'$ with
birational morphism $\widetilde{T}'\to \widetilde{T}$.
It is straightforward to exhibit such $\widetilde{T}'$, fitting into a
commutative diagram of smooth projective surfaces
\[
\xymatrix{
\widetilde{T}'\ar[r] \ar[d] & T \ar[d] \\
\widetilde{S} \ar[r] & S
}
\]
where the bottom morphism is proper birational and $\widetilde{T}\to \widetilde{S}$ is
finite of degree $2$.
We conclude by applying Theorem \ref{thm.mdisb} over
the complement of the singular locus of the image in $\widetilde{S}$
of the degeneracy locus of the conic bundle
filling in over the missing points as in the proof of Theorem 1.2 of \cite{KTsurf}.
\end{proof}

\begin{rema}
\label{rem.qm}
Given a quadric surface fibration, similar machinery can be employed to
produce models, fibered in quadric surfaces, with controlled singularities.
These have, as we recall, $D_3=D_4=\emptyset$.
Over $D_2$ the models have, generically, geometrically reducible fibers
(unions of two planes), and the models have $\mathsf{A}_1$-singularities
along a double cover of $D_2$.
An explicit description of the singularities
can be extracted from \cite[\S 5]{HPT}.
\end{rema}

\begin{theo}
\label{thm.main2}
Let $k$ be an algebraically closed field of characteristic different from $2$,
$S$ a smooth projective surface,
$T$ a smooth projective surface with morphism $\psi\colon T\to S$ that is generically
finite of degree $2$ or the disjoint union of two copies of $S$ with $\psi\colon S\sqcup S\to S$,
and $\beta$ a possibly ramified $2$-torsion Brauer group element on $T$,
i.e., an element of $\Br(k(T))[2]$ or a pair of elements of
$\Br(k(S))[2]$.
Then there exists a diagram
\[
\xymatrix{
\widetilde{X} \ar[d]_{\tilde\pi} \\
\widetilde{S} \ar[r]^{\varrho_S} & S
}
\]
where $\tilde\pi$ is a simple involution surface bundle over a smooth projective surface $\widetilde{S}$ and
$\varrho_S$ is a birational morphism,
such that the involution surface
\[ \widetilde{X}\times_{\widetilde{S}}\Spec(k(S)) \]
is classified by the \'etale $k(S)$-algebra $k(T)$, respectively $k(S)\times k(S)$, and
a central simple algebra with Brauer class $\beta$.
\end{theo}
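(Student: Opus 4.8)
The plan is to realize the prescribed involution surface as the generic fibre of a projective morphism to $S$ and then to invoke Theorem~\ref{thm.mainst}. Set $K=k(S)$ and let $L$ be the \'etale $K$-algebra of degree $2$ determined by $\psi$: the quadratic field extension $k(T)$ when $T$ is irreducible and $\psi$ is generically finite of degree $2$, and $K\times K$ when $T=S\sqcup S$. A $2$-torsion class in the Brauer group of a field is the class of a unique central simple algebra of degree $2$, so $\beta$ determines such an algebra $B$ over $L$ (componentwise in the split case, $B=B_1\times B_2$ with $[B_i]=\beta_i$). Let $X_\eta$ be the involution surface over $K$ classified by $(L,B)$ --- the restriction of scalars along $L/K$ of the Brauer--Severi conic of $B$ --- rigidified by $L/K$. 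Its discriminant extension is then $L=k(T)$ as a $K$-algebra, and the rigidification pins down the associated Brauer class to be $\beta$, not merely its Galois conjugacy class, as in the discussion preceding Definition~\ref{def.IVmarking}.

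Next I would produce an initial projective model. Since $X_\eta$ is a del Pezzo surface of degree $8$, Remark~\ref{rem.relveryample} provides an anticanonical embedding of $X_\eta$ into $\PP^8$ over $K$; the scheme-theoretic closure of $X_\eta$ in $S\times\PP^8$ is a projective variety $X$ equipped with a projective morphism $\pi\colon X\to S$ whose generic fibre is $X_\eta$, hence an involution surface. (Equivalently one may run the proof of Theorem~\ref{thm.mainst} starting directly from the pair consisting of $\psi\colon T\to S$ and the conic over $k(T)$: blow up $S$ to make the branch locus a simple normal crossing divisor, spread the conic out to a standard conic bundle on a smooth projective surface birational to $T$ via \cite{sarkisovconicbundle}, fit it into a commutative square over a blow-up of $S$ so that the upper map is finite of degree $2$, and apply Theorem~\ref{thm.mdisb}.)

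Applying Theorem~\ref{thm.mainst} to $\pi\colon X\to S$ produces a smooth projective surface $\widetilde{S}$, a proper birational morphism $\varrho_S\colon\widetilde{S}\to S$, a simple involution surface bundle $\tilde\pi\colon\widetilde{X}\to\widetilde{S}$, and a birational map $\varrho_X$ from $\widetilde{X}$ to $X$ restricting to an isomorphism over the generic point of $S$. Since $\varrho_S$ is birational, $k(\widetilde{S})=K$, and $\varrho_X$ identifies $\widetilde{X}\times_{\widetilde{S}}\Spec(K)$ with $X_\eta$. Therefore the generic fibre of $\tilde\pi$ is the involution surface classified by the \'etale $k(S)$-algebra $k(T)$, respectively $k(S)\times k(S)$, and the Brauer class $\beta$, which is the asserted conclusion.

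The substance of the statement is entirely contained in Theorems~\ref{thm.mdisb} and~\ref{thm.mainst}; the present result is a repackaging of them. The only points requiring attention are the existence of an initial projective model with the correct generic fibre, which is immediate from the anticanonical embedding, and the bookkeeping of the rigidification so that the Brauer class is recorded on the nose rather than up to conjugation; the latter is the one place I would expect to need some care, but it is handled exactly as in the treatment of rigidified bundles and Type IV markings earlier in the paper.
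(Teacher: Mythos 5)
Your reduction to Theorem \ref{thm.mainst} (take the involution surface over $K=k(S)$ classified by $(L,B)$, spread it out via the anticanonical embedding, and regularize) is indeed how this statement is meant to follow from the earlier results. However, the one substantive step is justified by a claim that is false for general fields: it is \emph{not} true that a $2$-torsion class in the Brauer group of an arbitrary field is the class of a central simple algebra of degree $2$. The index of a Brauer class can strictly exceed its period; for instance there exist biquaternion division algebras of exponent $2$ and index $4$ (already over fields such as $\Q(t)$ or $k(x,y,z)$). Since the classification of involution surfaces recalled at the beginning of Section \ref{sec:generic} requires a \emph{degree $2$} central simple algebra $B$ over $L$, without a correct argument here you cannot even write down the generic fibre $X_\eta$, so this is a genuine gap rather than a cosmetic one.

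The correct justification is precisely the point the paper makes in its (one-line) proof: $k(T)$, respectively $k(S)$, is the function field of a surface over an algebraically closed field, hence a $C_2$-field by the Tsen--Lang theorem, and over a $C_2$-field every $2$-torsion Brauer class has index dividing $2$ (Artin's period-index result for $2$-torsion over $C_2$-fields; alternatively de Jong's period-index theorem for surfaces), so $\beta$ is the class of a quaternion algebra, i.e.\ of a conic over $L$. Once you replace your false general claim by this argument, the rest of your proposal --- taking the restriction of scalars of that conic, closing up in $S\times\PP^8$ (or running the construction of Theorem \ref{thm.mdisb} directly on a standard conic bundle model), and applying Theorem \ref{thm.mainst}, with the rigidification bookkeeping you describe --- is correct and is the intended route.
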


The classification data of an involution surface over a field has been recalled
at the beginning of Section \ref{sec:generic}.

\begin{proof}
The function field of a surface over $k$ is a $C_2$-field, hence
$\beta$ is the class of a quaternion algebra.
\end{proof}

\begin{theo}
\label{thm.main3}
Let $k$ be an algebraically closed field of characteristic different from $2$,
$\psi\colon T\to S$ a finite morphism of degree $2$ of smooth surfaces over $k$ or a morphism of
the form $S\sqcup S\to S$ for a smooth surface $S$.
Let the branch divisor of $\psi$ be written as a disjoint union
\[ D_1\cup D_3 \]
of divisors on $S$, and let $D_2$ and $D_4$ be additional divisors on $S$ and $D'_4$ on $T$,
such that $D:=D_1\cup D_2\cup D_3\cup D_4$ is a simple normal crossing divisor on $S$,
for $i\ne j$ divisors $D_i$ and $D_j$ have no component in common,
$(D_1\cup D_3)\cap D_4=\emptyset$, and $\psi(D'_4)=D_4$, with every component of $D'_4$
mapping isomorphically to a component of $D_4$ and no two components mapping to the same
component of $D_4$.
The correspondence of Theorem \ref{thm.mdisb} over the complement of $D^{\mathrm{sing}}$
extends to a correspondence between
\begin{itemize}
\item regular conic bundles over $T$ with singular fibers over
$\psi^{-1}(D_2)\cup D_3\cup D'_4$, and
\item simple involution surface bundles over $S$,
rigidified by $T\to S$, where away from $D^{\mathrm{sing}}$ we have singular fibers over
$D_1$, $\dots$, $D_4$ as in Definition \ref{defn.simple} and
Type IV marking $D'_4$.
\end{itemize}
\end{theo}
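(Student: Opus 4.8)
The plan is to combine the local constructions of Section \ref{sec:generic} (for behavior over smooth points of $D$) with the explicit codimension-$2$ models $X_{\mathrm{I,II}},\dots,X''_{\mathrm{IV,IV}}$ of Section \ref{sec:codim2} (for behavior at points of $D^{\mathrm{sing}}$), patching them together over $S$. First I would fix a regular conic bundle $\rho\colon C\to T$ with singular fibers over $\psi^{-1}(D_2)\cup D_3\cup D'_4$. Over the open set $S\smallsetminus D^{\mathrm{sing}}$, Theorem \ref{thm.mdisb} applies verbatim and produces a mildly degenerating simple involution surface bundle $\pi^\circ\colon X^\circ\to S\smallsetminus D^{\mathrm{sing}}$, rigidified by $T\to S$, with Type $I,\dots,IV$ fibers over $D_1,\dots,D_4$ and Type IV marking $D'_4$. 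By Remark \ref{rem.relveryample}, $X^\circ$ is canonically embedded in $\PP(\pi^\circ_*(\omega_{X^\circ/S^\circ}^\vee)^\vee)$, and the task is to fill in $X^\circ$ over the finitely many points of $D^{\mathrm{sing}}$ so that the result is a simple involution surface bundle in the sense of Definition \ref{defn.simple2}.

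The key step is the local analysis at a point $s\in D^{\mathrm{sing}}$. Since $D$ is a simple normal crossing divisor and $D_i,D_j$ have no common component, exactly two branches $D_i,D_j$ of $D$ pass through $s$, with $i\le j$; because $(D_1\cup D_3)\cap D_4=\emptyset$ and $D_1,D_3$ are smooth and disjoint, the only possibilities for $\{i,j\}$ are those realized in Section \ref{sec:codim2}: $\{1,2\},\{2,2\},\{3,2\},\{4,2\}$, and $\{4,4\}$ (the last split into two subcases according to whether the two components of $D'_4$ through $s$ meet). In the henselization $\cO^h_{S,s}$ the data $(T\to S, C\to T, D'_4)$ becomes, after an étale coordinate change, exactly the data used to build the corresponding $X_{*,*}$ in Section \ref{sec:codim2} — here I would invoke the classification of conic bundles over a henselian base in terms of the discriminant extension and Brauer class recalled at the start of Section \ref{sec:generic}, together with the observation that a regular conic bundle over a $2$-dimensional regular henselian local ring with the prescribed degeneracy is determined up to isomorphism by that data. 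Consequently $X^\circ\times_S\Spec(\cO^h_{S,s})$ agrees, over the punctured henselian local scheme, with the corresponding punctured model $X_{*,*}\times_{\A^2}(\A^2\smallsetminus\{0\})$ under the anticanonical embedding; since both are closures of this common open part in projective space and flat over the base (the flatness and the ``closure'' property of $X_{*,*}$ were verified in Section \ref{sec:codim2}), they coincide. This glues $X_{*,*}$ into $X^\circ$ to produce a flat projective $\tilde\pi\colon X\to S$ whose singular-fiber locus is $D$, with the prescribed structure away from $D^{\mathrm{sing}}$ and henselian-local form $\widehat X_{*,*}$ at each point of $D^{\mathrm{sing}}$ — i.e., a simple involution surface bundle. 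The reverse direction: a simple involution surface bundle over $S$ restricts over $S\smallsetminus D^{\mathrm{sing}}$ to a mildly degenerating simple bundle, to which the reverse construction of Theorem \ref{thm.mdisb} assigns a regular conic bundle over $T\smallsetminus\psi^{-1}(D^{\mathrm{sing}})$; since $\psi^{-1}(D^{\mathrm{sing}})$ is finite and $T$ is a smooth surface, one extends this conic bundle over the missing points by taking the closure (of $C^\circ$ in the relative Brauer--Severi scheme of the Azumaya algebra, which extends across codimension $2$), exactly as in the final paragraph of the proof of Theorem \ref{thm.mainst}, and one checks regularity of the extension using the explicit local forms of the $X_{*,*}$. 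That the two constructions are mutually inverse follows from the uniqueness in Theorem \ref{thm.mdisb} over the open part and the fact that everything in sight is a flat family over $S$ determined by its restriction to $S\smallsetminus D^{\mathrm{sing}}$.

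The main obstacle I anticipate is not any single computation but the verification, at each of the six local types, that the abstract data $(T\to S,C\to T,D'_4)$ over a henselian local $S$ really is étale-locally isomorphic to the normal form used in Section \ref{sec:codim2} — i.e., that there are no further moduli. For the unramified cases ($\{2,2\},\{4,2\},\{4,4\}$) this is the statement that a regular conic bundle over $\Spec k[x,y]^h$ degenerating along given coordinate axes is determined by which axes, which follows from the classification of conics over a field applied to the generic point together with the regularity hypothesis. For the ramified cases ($\{1,2\},\{3,2\}$) one must pass to the double cover $T$ and argue similarly there; the subtlety is that one must match not just $C$ but also the identification of rulings implicit in the rigidification, and in the Type III case the Brauer class must also be ramified along $D_3$ in the prescribed way. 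I would handle this by the same local cotangent-space bookkeeping used in the proof of Theorem \ref{thm.mdisb} (the exact sequences \eqref{eqn.D3exa1}, \eqref{eqn.D3exa3} and their analogues), which pin down the étale-local defining equations; once the local normal forms are established, the gluing and the verification of Definition \ref{defn.simple2} are formal.
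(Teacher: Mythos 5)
Your proposal is correct and follows essentially the same route as the paper: the paper's own proof is a one-line appeal to Theorem \ref{thm.mdisb} over the complement of $D^{\mathrm{sing}}$, with the filling-in over the finitely many points of $D^{\mathrm{sing}}$ done ``as before,'' i.e., as in the proof of Theorem \ref{thm.mainst} (the argument of Theorem 1.2 of \cite{KTsurf}), using the codimension-$2$ models of Section \ref{sec:codim2} as the henselian-local normal forms. Your explicit discussion of the rigidity of the local data (no moduli for regular conic bundles with prescribed degeneration over a henselian local base) simply spells out what the paper leaves implicit.
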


\begin{proof}
We apply Theorem \ref{thm.mdisb} over the complement of $D^{\mathrm{sing}}$
with the argument as before for filling in over $D^{\mathrm{sing}}$.
\end{proof}

\bibliographystyle{plain}
\bibliography{pn}

\begin{thebibliography}{1}

\bibitem{auelbernardara}
A.~Auel and M.~Bernardara.
\newblock Semiorthogonal decompositions and birational geometry of del {P}ezzo
  surfaces over arbitrary fields, 2015.
\newblock {\tt arXiv:1511.07576}, to appear in Proc. London Math. Soc.

\bibitem{APS}
A.~Auel, R.~Parimala, and V.~Suresh.
\newblock Quadric surface bundles over surfaces.
\newblock {\em Doc. Math.}, (Extra vol.: Alexander S. Merkurjev's sixtieth
  birthday):31--70, 2015.

\bibitem{corti}
A.~Corti.
\newblock Del {P}ezzo surfaces over {D}edekind schemes.
\newblock {\em Ann. of Math. (2)}, 144(3):641--683, 1996.

\bibitem{HKTconic}
B.~Hassett, A.~Kresch, and Yu. Tschinkel.
\newblock Stable rationality and conic bundles.
\newblock {\em Math. Ann.}, 365(3-4):1201--1217, 2016.

\bibitem{HKTthreefolds}
B.~Hassett, A.~Kresch, and Yu. Tschinkel.
\newblock Stable rationality in smooth families of threefolds, 2018.
\newblock {\tt arXiv:1802.06107}.

\bibitem{HPT}
B.~Hassett, A.~Pirutka, and Yu. Tschinkel.
\newblock Stable rationality of quadric surface bundles over surfaces, 2016.
\newblock {\tt arXiv:1603.09262}, to appear in Acta Math.

\bibitem{KTsurf}
A.~Kresch and Yu. Tschinkel.
\newblock Models of {B}rauer-{S}everi surface bundles, 2017.
\newblock {\tt arXiv:1708.06277}.

\bibitem{sarkisovconicbundle}
V.~G. Sarkisov.
\newblock On conic bundle structures.
\newblock {\em Izv. Akad. Nauk SSSR Ser. Mat.}, 46(2):371--408, 432, 1982.

\bibitem{siersma}
D.~Siersma.
\newblock Isolated line singularities.
\newblock In {\em Singularities, {P}art 2 ({A}rcata, {C}alif., 1981)},
  volume~40 of {\em Proc. Sympos. Pure Math.}, pages 485--496. Amer. Math.
  Soc., Providence, RI, 1983.

\end{thebibliography}

\end{document}